\documentclass[12pt,reqno]{amsart}

\usepackage[T1]{fontenc}
\usepackage[margin=1.1in]{geometry}
\usepackage{amsmath,amssymb,mathtools}
\usepackage{xcolor}
\usepackage{microtype}
\usepackage[
hypertexnames=false,
pdftitle={Rational Spectra for Finite Hadamard Pairs and Bounded Sets on the real Line},
pdfauthor={Xiao-Ye Fu and Zi-Jian Song},
pdfkeywords={Spectral set, rational spectrum, finite exponential Hadamard pair.}
]{hyperref}
\newtheorem{theorem}{Theorem}[section]
\newtheorem{lemma}[theorem]{Lemma}
\newtheorem{proposition}[theorem]{Proposition}

\theoremstyle{definition}
\newtheorem{definition}[theorem]{Definition}

\newtheorem{remark}[theorem]{Remark}

\newcommand{\Q}{\mathbb Q}
\newcommand{\Z}{\mathbb Z}
\newcommand{\Gal}{\operatorname{Gal}}

\newcommand{\rank}{\operatorname{rank}}

\numberwithin{equation}{section}

\title{Rational Spectra for Finite Hadamard Pairs
	and Bounded Spectral Sets on the real line}

\author{Xiao-Ye Fu}
\author{Zi-Jian Song}

\subjclass[2020]{42C15, 42C40, 42A85, 28A80}
\keywords{ Spectral set, rational spectrum, finite exponential Hadamard pair.}

\begin{document}

	\begin{abstract}
		We prove that every finite spectral pair \((A,\Gamma)\) with
		\(A\subset\mathbb Z\), \(\Gamma\subset\mathbb R/\mathbb Z\), and
		\(0\in\Gamma\) has a rational spectrum, that is, \(	\Gamma\subset\mathbb Q/\mathbb Z.\)
	Our argument relies on a modulus rigidity theorem for generalized
		Vandermonde systems satisfying inverse-orthogonality relations, constructed by hyperbolic positive-definite kernels. Combined
		with Galois conjugation and Kronecker's theorem, this rigidity forces the
		associated exponential nodes to be roots of unity. As an application,
		using the periodicity and fiberization of one-dimensional spectra, we
		show that every spectrum \(\Lambda\) of a bounded measurable spectral set
		\(\Omega\subset\mathbb R\) with \(|\Omega|=1\) and \(0\in\Lambda\) is
		contained in \(\mathbb Q\). This rational-spectrum result completes a chain of equivalences proven by Dutkay and Lai, which reduces the full one-dimensional Fuglede's conjecture to its finite cyclic analogues over \(\mathbb{Z}_n\).
		 %
	\end{abstract}
	\maketitle

\section{Introduction}
\label{sec:introduction}

Let \(\Omega\subset\mathbb R^d\) be a bounded measurable set with positive Lebesgue measure. We call \(\Omega\) a \emph{spectral set} if there exists a countable set \(\Lambda\subset\mathbb R^d\) such that
\[
E(\Lambda):=\left\{e_\lambda(x):=e^{2\pi i\langle\lambda,x\rangle}:\lambda\in\Lambda\right\}
\]
forms an orthogonal basis of \(L^2(\Omega)\). In this case, \(\Lambda\) is called a \emph{spectrum} of \(\Omega\), and \((\Omega,\Lambda)\) is called a \emph{spectral pair}.

\vspace{0.2cm}

The notion of spectral sets was introduced by Fuglede \cite{Fuglede1974} in his study of commuting self-adjoint extensions of partial differential operators. He conjectured that a bounded measurable set in \(\mathbb R^d\) is spectral if and only if it tiles \(\mathbb R^d\) by translations. While this equivalence holds in the lattice setting considered by Fuglede's original framework, counterexamples have since been constructed for general measurable sets in dimensions \(d\geq 3\) \cite{Tao2004,Matolcsi2005,KolountzakisMatolcsi2006,FarkasMatolcsiMora2006}. The conjecture remains open in dimensions one and two (see \cite{Kolountzakis2024} for a recent overview).

\vspace{0.2cm}

The one-dimensional problem has a particularly rigid structure. Among the early positive results, Laba proved Fuglede's conjecture
for unions of two intervals \cite{Laba2001}. For finite unions of
intervals, Bose and Madan established the periodicity of every spectrum
\cite{BoseMadan2011}, and Kolountzakis subsequently gave a shorter
proof \cite{Kolountzakis2012}. Iosevich and Kolountzakis later extended
this periodicity theorem to arbitrary bounded measurable spectral sets
on the real line \cite{IosevichKolountzakis2013}. More recently, weak
tiling has provided a complementary geometric restriction on spectral
sets \cite{KolountzakisLevMatolcsi2023,LevMatolcsi2022}.

\vspace{0.2cm}

Consequently, a direct application of these periodicity results is that every spectrum of a bounded spectral set on the line is determined by finitely many representatives together with a periodic repetition. Periodicity alone, however, does not determine the arithmetic nature of those representatives, and it is natural to ask whether spectrality forces them to be rational.

%

\vspace{0.2cm}

The importance of this question was made precise by Dutkay and Lai \cite{DutkayLai2014}, who compared the spectral-set conjectures on \(\mathbb R\), \(\mathbb Z\), and the finite cyclic groups \(\mathbb Z_n\). Write \(\mathrm{S\!-\!T}(G)\) for the assertion that every bounded spectral set in \(G\) tiles \(G\), and \(\mathrm{T\!-\!S}(G)\) for the converse assertion. Here, a statement involving \(\mathbb Z_n\) is understood to hold for every \(n\in\mathbb N\). They proved
\[
\mathrm{T\!-\!S}(\mathbb R)
\Longleftrightarrow
\mathrm{T\!-\!S}(\mathbb Z)
\Longleftrightarrow
\mathrm{T\!-\!S}(\mathbb Z_n),
\]
where the finite cyclic formulation is also equivalent to the corresponding universal spectrum conjecture \cite{FarkasMatolcsiMora2006, LagariasWang1997}. For the opposite direction, they established
\[
\mathrm{S\!-\!T}(\mathbb R)
\Longrightarrow
\mathrm{S\!-\!T}(\mathbb Z)
\Longrightarrow
\mathrm{S\!-\!T}(\mathbb Z_n),
\]
and proved that the reverse implications follow provided every bounded spectral set of Lebesgue measure one admits a rational spectrum. 
They also showed that a full positive resolution of Fuglede conjecture on \(\mathbb R\) would itself imply the existence of such rational spectra.

\vspace{0.2cm}

Rationality provides the critical arithmetic bridge that reduces the continuous spectral problem to finite cyclic group analysis. After using periodicity and the fiberization decomposition developed in Section \ref{sec:fiberization}, the continuous problem restricts to a finite frequency set \(\Gamma\subset\mathbb R/\mathbb Z\). If \(\Gamma\subset\mathbb Q/\mathbb Z\), then a common denominator realizes these frequencies as characters of some finite cyclic group \(\mathbb Z_N\). The resulting exponential orthogonality relations become tractable through finite group representation theory. Thus rationality is not merely a refinement of periodicity; it is what allows the one-dimensional continuous problem to be reduced to its finite cyclic analogue. 

\vspace{0.2cm}

This connection is particularly relevant in view of the progress on Fuglede's conjecture for cyclic groups. Laba's work on the multiplicative structure of roots of mask polynomials provided an
early link between integer tilings, cyclotomic divisibility, and spectrality \cite{Laba2002}. The full conjecture has been proved for cyclic groups of order \(p^nq\), \(pqr\), and \(pqrs\), where the
letters denote distinct primes \cite{LabaLondner2023, MalikiosisKolountzakis2017,Shi2019,
 KissMalikiosisSomlaiVizer2022}. Malikiosis also obtained further structural results and additional cases for groups of order \(p^mq^n\) \cite{Malikiosis2022}. These works show that roots of unity
and cyclotomic structure are central features of the finite cyclic
problem.

\vspace{0.2cm}

The finite-dimensional problem arising in this reduction is also of independent interest. Let \(A\subset\mathbb Z\) and \(\Gamma\subset\mathbb R/\mathbb Z\) be finite sets with \(\# A =\# \Gamma=d.\)
Then \((A,\Gamma)\) is a finite spectral pair precisely when
\[
\frac{1}{\sqrt d}
\left(e^{2\pi i a\gamma}\right)_{a\in A,\ \gamma\in\Gamma}
\]
is unitary. Hence every finite spectral pair gives rise to a complex Hadamard matrix. Such matrices and finite spectral pairs have played an important role in the study of Fuglede's conjecture; see \cite{LagariasWang1997,KolountzakisMatolcsi2006}. Finite Hadamard
structures also occur naturally in the construction of self-affine
spectral measures \cite{DutkayHaussermannLai2019}.  General complex Hadamard matrices may occur in continuous parameter families and need not have roots of unity as their entries \cite{TadejZyczkowski2006}. The matrices considered here, however, possess an additional generalized Vandermonde structure
\[
\left(z_j^a\right)_{a\in A,\ 0\leq j<d},
\qquad |z_j|=1,
\]
with integer exponents, which enforces significantly stronger rigidity constraints. 

\vspace{0.2cm}

 The proof of our main rationality theorem will rely on a fundamental concircularity rigidity theorem for inverse-orthogonal systems, which we isolate here for clarity. It asserts that any family of nonzero complex numbers satisfying the inverse-orthogonality relations must have a common modulus. 
 
\vspace{0.2cm}

\begin{theorem}
	\label{thm:concyclic-rigidity}
	Let \(A\subset\mathbb Z\) be a finite set with \(\# A=d\), and let
	\(w_1,\ldots,w_d\in\mathbb C^\times\), where \(\mathbb C^\times = \mathbb C\setminus \{0\}\),  satisfy
	\begin{equation}
		\sum_{a\in A}
		\left(\frac{w_j}{w_i}\right)^a
		=
		d\delta_{ij},
		\qquad 1\le i,j\le d.
		\label{eq:inverse-orthogonality}
	\end{equation}
	Then
	\[
	|w_1|=\cdots=|w_d|.
	\]
\end{theorem}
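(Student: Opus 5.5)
The plan is to turn the hypothesis into a matrix identity between two positive-definite Gram matrices, and then extract an inequality that can be an equality only when all the moduli coincide.

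\emph{Matrix reformulation.} Write \(w_j=r_je^{i\theta_j}\) with \(r_j>0\). Let \(V=(w_j^{a})_{a\in A,\,1\le j\le d}\) and \(W=(w_j^{-a})_{a\in A,\,1\le j\le d}\). Then \eqref{eq:inverse-orthogonality} says exactly \(W^{T}V=dI\). Hence \(V\) is invertible and \(W^{T}=dV^{-1}\). Consequently also \(VW^{T}=dI\), which is the dual relation
\[
\sum_{j=1}^d w_j^{a-b}=d\,\delta_{ab}\qquad (a,b\in A).
\]
Form the Gram matrices \(P=V^{*}V\) and \(Q=W^{*}W\). Both are Hermitian positive definite, with entries
\[
P_{ij}=\sum_{a}\overline{w_i}^{\,a}w_j^{a},\qquad Q_{ij}=\sum_a \overline{w_i}^{\,-a}w_j^{-a}.
\]
From \(W=d(V^{-1})^{T}\) one computes \(Q=d^{2}\,\overline{P^{-1}}\), that is, \(P\,\overline{Q}=d^{2}I\).

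\emph{Inequality and equality case.} For a positive definite \(P\) one has \((P^{-1})_{ii}\ge 1/P_{ii}\), with equality iff the \(i\)-th column of \(P\) is a multiple of \(e_i\). This gives
\[
\Big(\sum_a r_i^{2a}\Big)\Big(\sum_a r_i^{-2a}\Big)=P_{ii}Q_{ii}\ge d^{2}.
\]
This particular inequality is also just Cauchy--Schwarz, so it is not yet decisive. The real content must come from comparing traces. On one side, \(\operatorname{tr}(P\overline Q)=d^{3}\). Expanding the trace gives
\[
\operatorname{tr}(P\overline Q)=\sum_{a,b\in A}\Big|\sum_{i} r_i^{\,a-b}e^{i(a+b)\theta_i}\Big|^{2}.
\]
Symmetrizing in \((a,b)\) produces the hyperbolic kernel \(\cosh\big((a-b)(\log r_i-\log r_j)\big)\), weighted by \(\cos\big((a+b)(\theta_i-\theta_j)\big)\). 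The matrix \(\big(\cosh(s\,(t_i-t_j))\big)_{i,j}\) is positive semidefinite, being the Gram matrix of \((e^{st_i},e^{-st_i})/\sqrt2\). Subtracting the unimodular case (all \(r_i=1\)), I would aim to write
\[
d^{3}=\operatorname{tr}(P\overline Q)=d^{3}+\sum_{a\neq b}\big(\text{a nonnegative quadratic form in } \cosh((a-b)(t_i-t_j))-1\big),
\]
where \(t_i=\log r_i\) and the positivity comes from Schur products of the positive semidefinite matrices \(\big(\cosh((a-b)(t_i-t_j))-1\big)_{i,j}\) and \(\big(e^{i(a+b)(\theta_i-\theta_j)}\big)_{i,j}\). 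This is the sense in which the "hyperbolic positive-definite kernels" of the abstract would enter. The dual relation \(\sum_j w_j^{a-b}=0\) for \(a\ne b\) is what should supply the baseline value \(d^{3}\).

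\emph{Conclusion and main obstacle.} Vanishing of the nonnegative excess then forces \(\cosh((a-b)(t_i-t_j))=1\) for some \(a\neq b\) and all pairs \(i,j\), or at least for a connected set of pairs. Hence \(t_i=t_j\), that is, \(|w_1|=\cdots=|w_d|\). The main obstacle is the middle step. I must check that the expansion of \(\operatorname{tr}(P\overline Q)\) really splits as "\(d^3\) plus a Schur product of positive semidefinite kernels". I must also check that the excess is nondegenerate, i.e. that it detects every difference \(t_i-t_j\ne 0\) and is not cancelled by the phases \(\theta_i\). If the trace alone proves too weak, the first fallback is the full matrix identity \(P\overline Q=d^2I\), using its diagonal entries \(i=j\) weighted by \(r_i^{\pm}\) to isolate each \(t_i\).
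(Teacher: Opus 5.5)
Your trace step cannot work, because the trace identity carries no information beyond the hypothesis. Since $\overline Q=W^{T}\overline W$, for \emph{every} choice of $w_1,\dots,w_d$ one has
\[
\operatorname{tr}(P\overline Q)=\operatorname{tr}\bigl((VW^{T})(VW^{T})^{*}\bigr)=\sum_{a,b\in A}\bigl|p_{a-b}\bigr|^{2},\qquad p_n:=\sum_{j=1}^{d} w_j^{\,n}.
\]
So the phase in your expansion should be $e^{i(a-b)\theta_i}$, not $e^{i(a+b)\theta_i}$. Since $p_0=d$ always, $\operatorname{tr}(P\overline Q)=d^3+\sum_{a\ne b}|p_{a-b}|^2$ holds identically. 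The only available ``excess'' is therefore $\sum_{a\ne b}|p_{a-b}|^2$, and its vanishing is exactly the dual relation, i.e.\ the hypothesis. Asserting that this vanishing forces $t_i=t_j$ is the theorem itself. Subtracting the unimodular case does not help: the phases $e^{i\theta_j}$ need not satisfy the relations across different moduli, so their trace is not $d^3$ a priori. Your fallback has the same defect. Since $P\overline Q=V^{*}(VW^{T})\overline W$, the identity $P\overline Q=d^2I$ is just a rewriting of $VW^{T}=dI$, and each entry, e.g.\ $(P\overline Q)_{ii}=\sum_{a,b}\overline{w_i}^{\,a-b}p_{a-b}$, is linear in the same power sums.

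The positivity you intend to use is also false. The Gram matrix of $(e^{st_i},e^{-st_i})/\sqrt2$ is $\bigl(\cosh(s(t_i+t_j))\bigr)_{i,j}$, not $\bigl(\cosh(s(t_i-t_j))\bigr)_{i,j}$. The latter has $2\times2$ minors $1-\cosh^2(s(t_i-t_j))<0$; indeed $\cosh$ is unbounded, so it is not a positive-definite function. Moreover, $\bigl(\cosh(s(t_i-t_j))-1\bigr)_{i,j}$ has zero diagonal, and a positive semidefinite matrix with zero diagonal vanishes. So it is positive semidefinite only when the $t_i$ already coincide. Symmetrizing $(a,b)\leftrightarrow(b,a)$ in your trace actually produces $\cosh\bigl((a-b)(t_i+t_j)\bigr)$.

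What is missing is the layer structure the paper exploits:
\begin{itemize}
\item Group $w_j=e^{t_r}u_j$ by modulus, with $0=t_1<\dots<t_s=T$.
\item Apply the hypothesis to pairs $i,j$ in the \emph{same} layer to get $E_r^*E_r=dI_{m_r}$ for $E_r=(u_j^a)_{a\in A,\,j\in I_r}$. Then $G_r=E_rE_r^*$ is $d$ times a rank-$m_r$ projection, with constant diagonal $m_r$ and entries $U_r(a-b)$.
\item Combine the dual relation at $n$ and $-n$ to get $\sum_r\sinh(t_rn)U_r(n)=0$.
\item Use that $x\mapsto\sinh(tx)/\sinh(Tx)$ with $0<t<T$ \emph{is} a bounded positive-definite function, being the Fourier transform of a positive density. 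Hence $C_r=\bigl(\sinh(t_r(a-b))/\sinh(T(a-b))\bigr)_{a,b}$, with diagonal $t_r/T$, is positive semidefinite.
\end{itemize}
The Schur product theorem then turns $G_s+\sum_{r=2}^{s-1}C_r\circ G_r=\lambda I$, with $\lambda=m_s+\sum_{r=2}^{s-1}(t_r/T)m_r<d$, into $\lambda I-G_s\succeq0$. This contradicts the fact that $G_s$ has eigenvalue $d$.
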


\vspace{0.2cm}

Theorem \ref{thm:concyclic-rigidity} is the central analytical tool of this paper, which is proved by employing a  hyperbolic positive-definite kernel and the Schur product theorem. It yields the uniform modulus property that, when combined with Galois automorphisms and Kronecker's theorem, supports  our rationality arguments. This modulus-uniformity rigidity provides an essential algebraico-analytic link between the discrete orthogonality conditions for finite spectral pairs and the rationality of spectral frequencies appearing in our main theorems. This modulus-uniformity rigidity theorem supplies an essential algebraic-analytic bridge connecting the discrete orthogonality condition for finite spectral sets to the rationality of spectral elements in our main results. We can now state our first main result.

\vspace{0.2cm}

For a finite set \(A\subset\mathbb Z\), define
\[
F_A(t):=\sum_{a\in A}e^{2\pi iat},
\qquad t\in\mathbb R/\mathbb Z.
\]

\begin{theorem}
	\label{thm:finite-rationality}
	Let \(A\subset\mathbb Z\) be a finite spectral set with \(\# A =d\), and let
	\[
	\Gamma=\{\gamma_0,\ldots,\gamma_{d-1}\}
	\subset\mathbb R/\mathbb Z,
	\qquad \gamma_0=0 
	\]
be its spectrum, that is,
	\[
	F_A(\gamma_j-\gamma_i)=0,
	\qquad i\neq j.
	\]
	Then
	\(
	\Gamma\subset\mathbb Q/\mathbb Z.
	\)
	More precisely, there exists \(N\in\mathbb N\) such that
	\(
	\Gamma\subset\frac1N\mathbb Z/\mathbb Z.
	\)
\end{theorem}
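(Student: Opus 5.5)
We plan to prove Theorem \ref{thm:finite-rationality} by combining Theorem \ref{thm:concyclic-rigidity} with Galois conjugation and Kronecker's theorem.

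\emph{Step 1: translate into equations over a field.} Put \(z_j=e^{2\pi i\gamma_j}\), so \(z_0=1\) and \(|z_j|=1\). Since \(\overline{z_i}=z_i^{-1}\), the spectral condition \(F_A(\gamma_j-\gamma_i)=0\) for \(i\ne j\), together with the trivial diagonal value \(d\), reads
\[
\sum_{a\in A}\left(\frac{z_j}{z_i}\right)^a=d\,\delta_{ij},\qquad 0\le i,j\le d-1 .
\]
This is a system of Laurent polynomial equations with rational coefficients. We would first reduce to the case where the \(z_j\) are algebraic. Let \(V\subset(\mathbb C^\times)^{d}\) be the variety of solutions \((w_0,\dots,w_{d-1})\) with \(w_0=1\). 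It is defined over \(\Q\).

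Suppose some \(z_j\) is transcendental. Then the point \((z_j)\) lies on a positive-dimensional \(\Q\)-irreducible component \(W\) of \(V\), and we use a specialization argument. By Theorem \ref{thm:concyclic-rigidity}, every point of \(W\) has all coordinates of modulus \(|w_0|=1\). A complex algebraic variety of positive dimension contained in the real-analytic torus \((S^1)^{d}\) must be constant in every coordinate. Indeed, each coordinate function restricted to an irreducible curve in \(W\) is a nonconstant holomorphic function of unit modulus, or else it is constant; the open mapping theorem forbids the first case. Hence \(W\) is a point, contradicting positive dimensionality. Therefore \(V\) is finite. Its points are algebraic, so every \(z_j\in\overline{\Q}\).

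\emph{Step 2: Galois conjugation.} Let \(K\) be the Galois closure of \(\Q(z_0,\dots,z_{d-1})\), and let \(\sigma\in\Gal(K/\Q)\), extended to an automorphism of \(\mathbb C\). The relations above have rational coefficients, so applying \(\sigma\) shows that \(w_j=\sigma(z_j)\) again satisfy \eqref{eq:inverse-orthogonality}. Theorem \ref{thm:concyclic-rigidity} then gives \(|\sigma(z_j)|=|\sigma(z_0)|=|\sigma(1)|=1\) for all \(j\) and all \(\sigma\). So every conjugate of every \(z_j\) lies on the unit circle.

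\emph{Step 3: Kronecker's theorem.} To apply Kronecker we also need each \(z_j\) to be an algebraic integer, and we expect this to be the main obstacle. Our first attempt exploits the fact that the matrix \(M=(z_j^a)_{a\in A,j}\) satisfies \(M^{-1}=\frac1d M^{*}\). Consider \(z_j\) at a non-archimedean place \(v\) of \(K\) with \(|z_j|_v\neq1\). Reorder the \(w_j=z_j\) by \(v\)-adic size and look at the dominant terms \(w_j^{\max A}\) and \(w_j^{\min A}\) in the sums \(\sum_a (w_j/w_i)^a=0\). This gives an ultrametric analogue of the argument in Theorem \ref{thm:concyclic-rigidity}.

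A cleaner route is to note that \(d\) is the only possible denominator. Then \(\prod_\sigma\) norm considerations, combined with the relation \(F_A\)-orthogonality \(\Rightarrow \sum_j z_j^{a-b}=d\delta_{ab}\) (the column orthogonality coming from unitarity), show that the power sums \(p_m=\sum_j z_j^m\) for \(m\in A-A\) are integers. We would then try to bootstrap these integral power sums into integrality of the elementary symmetric functions. If the \(v\)-adic argument succeeds, each \(z_j\) is an algebraic integer all of whose conjugates have modulus \(1\), so Kronecker's theorem makes \(z_j\) a root of unity, say \(z_j^{N_j}=1\). Taking \(N=\operatorname{lcm}_j N_j\) gives \(\gamma_j\in\frac1N\Z/\Z\).
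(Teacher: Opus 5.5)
Your Steps 1 and 2 are sound, but the proof has a genuine gap exactly where you flag it. Step 3 never shows that the $z_j$ are algebraic integers, and the argument ends conditionally (``if the $v$-adic argument succeeds''). This is not cosmetic, because Kronecker's theorem fails without integrality: $(3+4i)/5$ and its only conjugate $(3-4i)/5$ both have modulus $1$, yet neither is a root of unity. Your second route does not repair this either. Column orthogonality only controls the power sums $p_m=\sum_j z_j^m$ for $m\in A-A$. Those do not in general determine the elementary symmetric functions of the $z_j$, and Newton's identities introduce denominators anyway.

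The missing idea is a one-line observation, and it is the paper's first step. Take $i=0$ in the orthogonality relations. Since $z_0=1$, they give $\sum_{a\in A}z_j^a=0$ for every $j\neq0$. So $z_j$ is a root of $\sum_{a\in A}X^{a-\min A}$, a monic polynomial in $\mathbb Z[X]$ of degree $\max A-\min A\geq1$. (The paper phrases this as translating $A$ so that $\min A=0$, which does not change the zero set of $F_A$.) Hence each $z_j$ is an algebraic integer, and in particular algebraic. So your Step 1, which is correct but heavy, can be dropped entirely.

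Your ultrametric instinct was pointing at the same fact, but no analogue of Theorem~\ref{thm:concyclic-rigidity} is needed. In $\sum_{a\in A}z_j^a=0$, if $|z_j|_v>1$ then the term with $a=\max A$ strictly dominates all others, contradicting the ultrametric inequality. Once integrality is in place, your Galois conjugation step and the appeal to Kronecker's theorem are exactly the paper's argument.
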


\vspace{0.2cm}

We begin the proof by demonstrating that each number \(z_j=e^{2\pi i\gamma_j}\) is an algebraic integer. After applying a suitable Galois automorphism, the conjugate nodes satisfy the inverse-orthogonality relations as in Theorem \ref{thm:concyclic-rigidity}. The modulus rigidity Theorem \ref{thm:concyclic-rigidity}  then forces these conjugate nodes to have equal modulus. Since \(z_0=1\), every algebraic conjugates of \(z_j\) lies on the unit circle, and Kronecker's theorem then implies that each \(z_j\) is a root of unity.

\vspace{0.2cm}

Our second main result extends this finite-dimensional rigidity framework to bounded measurable spectral sets on the real line. Bose and Madan \cite{BoseMadan2018} obtainded partial positive progress on the one-dimensional rational spectrum conjecture: they verified spectral rationality for special periodic spectral sets consisting of finite unions of unit intervals and introduced flag-type criteria to test rationality for periodic spectra on \(\mathbb{R}\). We point out that Theorem \ref{thm:bounded-rationality} below was previously claimed in \cite{Zhou2024}. Nevertheless, 
 the critical summation formula on page \(9\) of \cite{Zhou2024} incorrectly supposes an entire arithmetic progression is contained inside the spectral set \(\Omega\), a claim disproven by an explicit counterexample provided by the reviewer. This formula is indispensable to the rest of the proof and the resulting gap cannot be fixed.
 In the present paper, we resolve this deficiency by employing Galois automorphisms and modular rigidity arguments.

\begin{theorem}
	\label{thm:bounded-rationality}
	Let \(\Omega\subset\mathbb R\) be a bounded measurable spectral set with
	\(
	|\Omega|=1,
	\)
	and let \(\Lambda\) be a spectrum of \(\Omega\) with \(0\in\Lambda\). Then
	\(
	\Lambda\subset\mathbb Q.
	\)
\end{theorem}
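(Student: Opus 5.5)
We plan to reduce Theorem \ref{thm:bounded-rationality} to Theorem \ref{thm:finite-rationality} through periodicity and fiberization.

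\emph{Periodicity.} By the Iosevich--Kolountzakis theorem \cite{IosevichKolountzakis2013}, every spectrum \(\Lambda\) of the bounded spectral set \(\Omega\) is periodic. So there is a period \(T>0\) and a finite set \(\{\lambda_0,\dots,\lambda_{d-1}\}\) with \(\lambda_0=0\) such that \(\Lambda=\{\lambda_0,\dots,\lambda_{d-1}\}+T\mathbb Z\). Since \(|\Omega|=1\), the density of \(\Lambda\) equals \(1\), which gives \(d=T\). The first point to check is that \(T\) can be taken to be an integer. The density argument alone only gives \(T=d\), the number of representatives modulo \(T\). Using that statement, \(T\in\mathbb N\), and \(\Lambda\subset\mathbb Q\) reduces to showing \(\lambda_j/d\in\mathbb Q\) for every \(j\).

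\emph{Fiberization.} Write \(\Lambda=\bigcup_j(\lambda_j+d\mathbb Z)\). Orthogonality of \(e_{\lambda_j+dm}\) and \(e_{\lambda_k+dn}\) in \(L^2(\Omega)\) for all \(m,n\), together with completeness, should give the following (this is the fiberization decomposition developed in Section~\ref{sec:fiberization}). For almost every \(x\in[0,1/d)\), the fiber \(A_x=\{a\in\mathbb Z: x+a/d\in\Omega\}\) has exactly \(d\) elements. Moreover, the \(d\times d\) matrix \(\big(e^{2\pi i\lambda_j(x+a/d)}\big)_{a\in A_x,\,j}\) is, up to the diagonal unitary factor \(e^{2\pi i\lambda_j x}\), \(\sqrt d\) times a unitary matrix.

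The derivation runs as follows. Expand \(\langle e_{\lambda_j+dm},e_{\lambda_k+dn}\rangle\) as \(\int_0^{1/d}e^{2\pi i(\lambda_j-\lambda_k)x}e^{2\pi i d(m-n)x}\sum_{a\in A_x}e^{2\pi i(\lambda_j-\lambda_k)a/d}\,dx\). Vanishing of this for all \(m-n\) forces \(\sum_{a\in A_x}e^{2\pi i(\lambda_j-\lambda_k)a/d}=0\) a.e. for \(j\neq k\). A Parseval/counting argument then forces \(\#A_x=d\) a.e.

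Thus, for a fixed typical \(x\), the pair \((A_x,\Gamma)\) with \(\Gamma=\{\lambda_j/d \bmod 1\}\) is a finite spectral pair in the sense of Theorem \ref{thm:finite-rationality}, and \(0\in\Gamma\). That theorem gives \(\lambda_j/d\in\mathbb Q\), hence \(\lambda_j\in\mathbb Q\). Since \(T=d\in\mathbb Z\), we conclude \(\Lambda\subset\mathbb Q\).

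\emph{Main obstacle.} The delicate step is the periodicity/period normalization: we must show the period can be taken so that \(\Lambda\) is a finite union of translates of \(d\mathbb Z\) with \(d=\#\) representatives, and that the \(\lambda_j\) are distinct modulo \(d\). This is needed for \(\Gamma\) to have exactly \(d\) distinct points. If Iosevich--Kolountzakis only gives some real period \(T\), we would use \(|\Omega|=1\): the density of \(\Lambda\) is \(\#\{\text{representatives}\}/T=1\), and this identifies \(T\) with an integer. A secondary technical point is choosing \(x\) from a full-measure set on which all \(\binom d2\) fiber identities hold simultaneously. This is a countable intersection of full-measure sets, so it causes no difficulty.
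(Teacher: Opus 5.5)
Your proposal is correct and matches the paper's proof step for step: Iosevich--Kolountzakis periodicity plus Landau's density theorem give an integer period $T=\#S$; fiberizing along $\frac1T\mathbb Z$ makes $S/T$ a spectrum of almost every integer fiber $A_x$; and Theorem~\ref{thm:finite-rationality} then forces $S\subset\mathbb Q$. The ``main obstacle'' you flag is harmless (take $S=\Lambda\cap[0,T)$, so the representatives are automatically distinct mod $T$ and $T=\#S\in\mathbb N$), and the a.e.\ constancy of $\#A_x$ comes from the $j=k$, $m\neq n$ case of your own expansion, exactly as in the paper.
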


The proof combines the periodicity property of one-dimensional spectra with the fiberization decomposition constructed in Section \ref{sec:fiberization}.
By periodicity, we write \(\Lambda=S+T\mathbb Z\) with \(S\subset[0,T)\), the fiberization theorem imples that \(S/T\) is a spectrum for a finite subset of \(\mathbb Z\). Applying Theorem~\ref{thm:finite-rationality} gives \(S/T\subset\mathbb Q/\mathbb Z\), so \(S\subset\mathbb Q\).  It follows that \(\Lambda\subset\mathbb Q\) since \(T\in\mathbb N\).

\vspace{0.2cm}

Theorem~\ref{thm:bounded-rationality} supplies the rational-spectrum
hypothesis in the reduction theorem of Dutkay and Lai \cite{DutkayLai2014}, and in fact
shows that every normalized spectrum is rational. Consequently, by
\cite[Theorem~1.3(ii)]{DutkayLai2014},
\[
\mathrm{S\!-\!T}(\mathbb R)
\Longleftrightarrow
\mathrm{S\!-\!T}(\mathbb Z)
\Longleftrightarrow
\bigl[
\mathrm{S\!-\!T}(\mathbb Z_n)
\text{ holds for every }n\in\mathbb N
\bigr].
\]
Together with their corresponding equivalence for the
tile-to-spectral direction, this reduces the one-dimensional Fuglede
conjecture to its finite cyclic analogues.

\vspace{0.2cm}

The paper is organized as follows. In Section~2, we collect all required preliminaries: finite spectral pairs, complex Hadamard matrices, periodicity of spectra, positive semidefinite matrices and positive-definite functions, and some background on algebraic number theory including Galois automorphisms and Kronecker’s theorem. In Section~3, we establish the fiberization theorem for periodic spectra, which reduces continuous spectral problems into almost-everywhere finite-dimensional fiberwise subproblems.  In Section~4, we construct the hyperbolic positive-definite kernel and prove the core modulus rigidity theorem \ref{thm:concyclic-rigidity}. In Section~5, we complete the proofs of Theorem~\ref{thm:finite-rationality} and Theorem~\ref{thm:bounded-rationality}.

\section{Preliminaries}
\label{sec:preliminaries}
This section collects all standard notation, fundamental definitions, and auxiliary results required for our analysis.
Throughout the paper, we write
\[
e_\lambda(x)=e^{2\pi i\lambda x},
\qquad
\mathbb T=\{z\in\mathbb C:|z|=1\},
\]
and denote by \(\delta_{ij}\) the  delta function.

\subsection{Finite spectral pairs and periodicity}

We begin with the finite-dimensional notion of spectrality that will 
arise from fiberizing bounded spectral sets on \(\mathbb {R}\).

\begin{definition}[Finite spectral pair]
	\label{def:finite-spectral-pair}
	Let \(A\subset\mathbb Z\) and
	\(\Gamma\subset\mathbb R/\mathbb Z\) be finite sets with \(\# A = \#\Gamma=d\).
	We say that \(\Gamma\) is a \emph{spectrum} of \(A\), or that
	\((A,\Gamma)\) is a \emph{finite spectral pair}, if the vectors
	\[
	\frac{1}{\sqrt d}
	\bigl(e^{2\pi i a\gamma}\bigr)_{a\in A},
	\quad \gamma\in\Gamma,
	\]
	form an orthonormal basis of \(\ell^2(A)\). Equivalently,
	\[
	\frac{1}{\sqrt d}
	\left(e^{2\pi i a\gamma}\right)_{a\in A,\ \gamma\in\Gamma}
	\]
	is a unitary matrix.
\end{definition}

Thus \((A,\Gamma)\) is a finite spectral pair if and only if
\[
\sum_{a\in A}e^{2\pi ia(\gamma-\gamma')}
=
d\,\delta_{\gamma,\gamma'},
\quad \gamma,\gamma'\in\Gamma.
\]

\begin{definition}[Complex Hadamard matrix]
	A matrix \(H\in M_d(\mathbb C)\) is called a \emph{complex Hadamard
		matrix} if every entry of \(H\) has modulus \(1\) and
	\[
	H^*H=dI_d.
	\]
	Equivalently, \(d^{-1/2}H\) is unitary.
\end{definition}

The matrix in Definition \ref{def:finite-spectral-pair} is a concrete example of a complex Hadamard matrix. The periodicity theorem below furnishes the fundamental structural property for bounded spectral subsets of the real line and underlies our fiberization construction in Section 3.

\begin{theorem}[
	\cite{IosevichKolountzakis2013}]
	\label{thm:one-dimensional-spectrum-periodic}
	Let \(\Omega\subset\mathbb R\) be a bounded measurable set with
	\(0<|\Omega|<\infty\). If \((\Omega,\Lambda)\) is a spectral pair, then
	\(\Lambda\) is periodic, that is,
	\[
	\Lambda=T\mathbb Z+\{\lambda_1,\ldots,\lambda_m\}
	\]
	for some \(T>0\) and finitely many \(\lambda_1,\ldots,\lambda_m\in\mathbb R\).
\end{theorem}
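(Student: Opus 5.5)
The plan follows the strategy of Iosevich and Kolountzakis and has two ingredients. The first is \emph{local finiteness}: up to translation, only finitely many configurations of \(\Lambda\) can occur in windows of a fixed length. The second is \emph{determination}: the configuration of \(\Lambda\) in a sufficiently long window determines the next point of \(\Lambda\). Combining the two by pigeonhole gives periodicity.

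The first ingredient is uniform discreteness of \(\Lambda\). Orthogonality of \(E(\Lambda)\) gives \(\Lambda-\Lambda\subset Z:=\{\xi:\widehat{1_\Omega}(\xi)=0\}\cup\{0\}\). Since \(\Omega\) is bounded, \(\widehat{1_\Omega}\) is entire of exponential type and not identically zero. Hence \(Z\) is a discrete subset of \(\mathbb R\). Moreover \(\widehat{1_\Omega}(0)=|\Omega|\neq0\), so a neighbourhood of \(0\) is free of zeros, and \(\Lambda\) is uniformly discrete. A standard Landau-type counting argument from the spectral identity \(\sum_{\lambda\in\Lambda}|\widehat{1_\Omega}(x-\lambda)|^2=|\Omega|^2\) shows that \(\Lambda\) has uniform density \(|\Omega|\).

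The second ingredient is finiteness of windows. Fix \(R>0\) and consider the translated window sets \((\Lambda\cap[t,t+R])-\min(\Lambda\cap[t,t+R])\). Each is contained in \(Z\cap[0,R]\), which is finite because \(Z\) is discrete. Hence only finitely many such patterns occur as \(t\) ranges over \(\mathbb R\).

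The third ingredient, and the main obstacle, is the determination lemma. We want to show that there is \(R=R(\Omega)\) such that, if \(\Lambda\) and \(\Lambda'\) are spectra of \(\Omega\) that agree on an interval \([t,t+R]\), then they also agree on \([t,t+R+\varepsilon]\) for a fixed \(\varepsilon>0\), and hence, by iteration, on \([t,\infty)\); the left direction is handled symmetrically. My first attempt would go through the Paley--Wiener space \(PW_\Omega\) of Fourier transforms of \(L^2(\Omega)\) functions. Since \(\Lambda\) is a spectrum, it is a complete interpolating set, and every \(F\in PW_\Omega\) satisfies \(F(\xi)=\sum_\lambda F(\lambda)\,\widehat{1_\Omega}(\xi-\lambda)/|\Omega|\). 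Suppose \(\mu\) is the first point of \(\Lambda'\) beyond the common window. Then \(\widehat{1_\Omega}(\cdot-\mu)\) vanishes on every element of \(\Lambda'\setminus\{\mu\}\), and in particular on the common window. From this I would try to derive a contradiction with the density and separation constraints unless \(\mu\in\Lambda\). Concretely, \(\mu-\lambda\in Z\) must hold for all \(\lambda\) in the window, and for \(R\) large this pins \(\mu\) down to the same choice as the corresponding point of \(\Lambda\). If this direct approach fails, the fallback is the Iosevich--Kolountzakis argument: \(\Omega\) can be assumed to lie in an interval of length \(L\), and one exploits that a function in \(PW_\Omega\) vanishing on a set of density exceeding \(L\) on a long interval is small there.

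Given the lemma, periodicity follows by pigeonhole. Choose integers \(n\) and windows \([nR',(n+1)R']\) with \(R'>R\). Finiteness of patterns yields two windows, at positions \(t_1<t_2\), on which \(\Lambda\) and \(\Lambda+(t_2-t_1)\) agree. Both sets are spectra of \(\Omega\), so determination propagates this agreement to all of \(\mathbb R\). Hence \(\Lambda=\Lambda+T\) with \(T=t_2-t_1\), and uniform discreteness gives \(\Lambda=T\mathbb Z+\{\lambda_1,\dots,\lambda_m\}\) for finitely many \(\lambda_1,\dots,\lambda_m\in[0,T)\).
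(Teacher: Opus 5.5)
The paper does not prove this statement; it imports it from \cite{IosevichKolountzakis2013}, so your outline has to stand on its own. The architecture is sound: uniform discreteness, finitely many window patterns, a determination lemma, then pigeonhole. The discreteness, pattern-counting and closing pigeonhole steps are essentially fine, with two small fixes. The period should be \(m_2-m_1\), the difference of the minima of \(\Lambda\) in the two windows, not \(t_2-t_1\). Also, \(R'\) must exceed \(R\) by a gap bound, so that \(\Lambda\) and \(\Lambda+T\) agree on an interval of length \(R\).

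The genuine gap is the determination lemma, which carries the whole content of the theorem and is not proved. The mechanism you propose cannot work. The condition \(\mu-\lambda\in Z\) for all \(\lambda\) in the common window encodes only orthogonality, and it does not pin down \(\mu\) however large \(R\) is. For \(\Omega=[0,1]\) one has \(Z=\mathbb Z\), and if the window consists of integers, every integer beyond it satisfies the constraint. Density cannot decide either, since it is unchanged by modifying \(\Lambda\) on a bounded set, and all these candidates are well separated. What selects the next point is completeness, a global condition that your argument never brings to bear locally. The Paley--Wiener fallback is a gesture, not an argument.

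The missing idea is to make completeness act on a half-line and then use compactness. Here is one route that works.

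Let \(\Lambda,\Lambda'\) be spectra and \(\nu=\delta_\Lambda-\delta_{\Lambda'}\). The identity \(\sum_{\lambda}|\widehat{1_\Omega}(x-\lambda)|^2=|\Omega|^2\) holds for both, so \(\nu*|\widehat{1_\Omega}|^2\equiv0\). The Fourier transform of \(|\widehat{1_\Omega}|^2\) is the continuous function \(1_\Omega*1_{-\Omega}\), which is nonzero on some \((-\rho,\rho)\). A local division argument (Wiener's lemma) then gives \(\widehat\nu=0\) on \((-\rho,\rho)\).

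Now suppose \(\Lambda\) and \(\Lambda'\) agree on \((-\infty,a]\). Then \(\widehat\nu\) is the distributional boundary value of \(\sum_{\lambda\in\Lambda,\,\lambda>a}e^{-2\pi i\lambda z}-\sum_{\lambda'\in\Lambda',\,\lambda'>a}e^{-2\pi i\lambda' z}\), which is holomorphic in \(\operatorname{Im}z<0\). Vanishing of the boundary value on an interval forces this function to vanish identically. Letting \(z=-iy\) with \(y\to\infty\) then peels off the exponents one by one, so \(\Lambda=\Lambda'\).

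To pass from a half-line to a window of fixed length \(R\), argue by contradiction.
\begin{itemize}
\item The tiling identity, combined with a Plancherel--P\'olya tail estimate for the entire function \(\widehat{1_\Omega}\) on uniformly separated sets, gives a gap bound uniform over all spectra of \(\Omega\). Note that the pointwise decay of \(\widehat{1_\Omega}\) is not enough for a general measurable \(\Omega\).
\item Translate so that both spectra contain \(0\) near the right end of the common window. This places them inside the fixed discrete set \(Z\).
\item A diagonal argument then gives limit sets. These are again spectra: orthogonality is inherited, and the tiling identity passes to the limit by the same tail estimate.
\item The limit sets agree on a half-line but still differ just after it, contradicting half-line determination.
\end{itemize}
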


\subsection{Positive semidefinite matrices and positive-definite functions}

For a complex matrix \(K\), we denote its conjugate
transpose by \(K^*\).

\begin{definition}
	A matrix \(K\in M_d(\mathbb C)\) is \emph{Hermitian} if
	$
	K^*=K.
	$
	A Hermitian matrix \(K\) is \emph{positive semidefinite}, written
	\(K\succeq0\), if
	\[
	c^*Kc\geq0,
	\quad\text{for every }c\in\mathbb C^n.
	\]
\end{definition}

If \(K=(K_{pq})_{p,q=1}^d\) and
\(c=(c_1,\ldots,c_d)^T\), then
\[
c^*Kc
=
\sum_{p,q=1}^d
\overline{c_p}K_{pq}c_q.
\]
By the spectral theorem for Hermitian matrices, a Hermitian matrix is positive semidefinite
if and only if all of its eigenvalues are nonnegative.

\begin{definition}[Hadamard product]
	Let \(K=(K_{pq})\) and \(L=(L_{pq})\) be matrices of the same size.
	Their \emph{Hadamard product}, also called the \emph{Schur product}, is
	the entrywise product
	\[
	K\circ L
	=
	(K_{pq}L_{pq})_{p,q}.
	\]
	This operation is different from ordinary matrix multiplication.
\end{definition}

\begin{theorem}[Schur product theorem \cite{Horn:2013:MatrixAnalysis}]
	\label{thm:schur-product}
	If \(K,L\in M_d(\mathbb C)\) are Hermitian positive semidefinite
	matrices, then
	\[
	K\circ L\succeq0.
	\]
\end{theorem}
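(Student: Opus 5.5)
The plan is to reduce to the rank-one case by using the spectral decomposition of one factor, and then to observe that the Hadamard product with a rank-one positive semidefinite matrix is a congruence of the other factor.

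First, both factors are Hermitian, so \(K\circ L\) is Hermitian. Indeed,
\[
\overline{(K\circ L)_{qp}}=\overline{K_{qp}}\,\overline{L_{qp}}=K_{pq}L_{pq}.
\]

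Next, since \(L\succeq0\), the spectral theorem for Hermitian matrices gives an orthonormal eigenbasis \(v^{(1)},\dots,v^{(d)}\) with eigenvalues \(\mu_k\geq0\). Hence
\[
L=\sum_{k=1}^d \mu_k\, v^{(k)}v^{(k)*},
\qquad\text{that is,}\qquad
L_{pq}=\sum_k \mu_k v^{(k)}_p\overline{v^{(k)}_q}.
\]
The Hadamard product is bilinear, so
\[
K\circ L=\sum_k\mu_k\,K\circ\bigl(v^{(k)}v^{(k)*}\bigr).
\]
A nonnegative combination of positive semidefinite matrices is positive semidefinite, because the quadratic forms add. It therefore suffices to treat the rank-one case \(L=vv^*\).

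In that case, let \(D_v=\operatorname{diag}(v_1,\dots,v_d)\). Then
\[
\bigl(K\circ vv^*\bigr)_{pq}=v_pK_{pq}\overline{v_q},
\qquad\text{so}\qquad
K\circ vv^*=D_vKD_v^*.
\]
For every \(c\in\mathbb C^d\),
\[
c^*D_vKD_v^*c=(D_v^*c)^*K(D_v^*c)\geq0
\]
because \(K\succeq0\). Combining this with the previous step gives \(c^*(K\circ L)c\geq0\) for all \(c\), which proves the theorem.

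Once the rank-one reduction is in place, nothing here is a genuine obstacle. The only point needing care is the conjugation convention in the identity \(K\circ vv^*=D_vKD_v^*\). As an alternative, one can write \(c^*(K\circ L)c=\operatorname{tr}\bigl(D_c^*KD_cL^{T}\bigr)\) and use that \(L^T\succeq0\) and that the trace of a product of two positive semidefinite matrices is nonnegative.
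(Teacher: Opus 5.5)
Your proof is correct and complete. The Hermitian check, the rank-one reduction via the spectral decomposition of \(L\), the congruence identity \(K\circ vv^*=D_vKD_v^*\), and the trace alternative all hold as written.

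The paper gives no argument for this statement; it only cites Horn and Johnson. Of the two usual proofs, yours is the rank-one/congruence one. The other notes that \(K\circ L\) is the principal submatrix of \(K\otimes L\) on the index pairs \((p,p)\). Since \(K\otimes L\) has eigenvalues \(\lambda_i\mu_j\geq0\), it is positive semidefinite, and hence so is \(K\circ L\). The Kronecker route is shorter once tensor products are available. Yours is more elementary, and it has a useful by-product here: orthonormality of the \(v^{(k)}\) is never used, so any decomposition \(L=\sum_k v^{(k)}v^{(k)*}\) works.

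In the proof of Theorem~\ref{thm:concyclic-rigidity} one already has \(G_r=E_rE_r^*=\sum_{j\in I_r}\eta_j\eta_j^*\) with columns \(\eta_j=(u_j^a)_{a\in A}\). Your argument then gives the explicit identity \(C_r\circ G_r=\sum_{j\in I_r}D_jC_rD_j^*\), where \(D_j=\operatorname{diag}(u_j^a)_{a\in A}\). Positivity of \(C_r\circ G_r\) therefore follows directly from that of \(C_r\), with no appeal to the spectral theorem.
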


\begin{definition}[Positive-definite function]
	A function \(f:\mathbb R\to\mathbb C\) is called
	\emph{positive definite} if, for every \(d\geq1\), every
	\(x_1,\ldots,x_d\in\mathbb R\), and every
	\(c_1,\ldots,c_d\in\mathbb C\),
	\[
	\sum_{p,q=1}^d
	\overline{c_p}c_q\,f(x_p-x_q)\geq0.
	\]
	Equivalently, for every finite sequence
	\(x_1,\ldots,x_d\), the matrix
	\[
	\bigl(f(x_p-x_q)\bigr)_{p,q=1}^d
	\]
	is Hermitian positive semidefinite.
\end{definition}

The following Fourier--Stieltjes criterion gives a standard source of
positive-definite functions.

\begin{proposition}
	\label{prop:fourier-stieltjes-positive-definite}
	Let \(\mu\) be a positive finite Borel measure on \(\mathbb R\), and
	define
	\[
	f(x)=\int_{\mathbb R}e^{i\xi x}\,d\mu(\xi).
	\]
	Then \(f\) is a continuous positive-definite function on
	\(\mathbb R\).
\end{proposition}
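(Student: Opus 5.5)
The plan is a direct computation. We expand the quadratic form defining positive definiteness into an integral of a squared modulus against \(\mu\), and we get continuity from dominated convergence. Since \(\mu\) is finite and \(|e^{i\xi x}|=1\), the integral defining \(f(x)\) converges absolutely for every \(x\), and \(|f(x)|\le\mu(\mathbb R)\).

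\emph{Continuity.} Let \(x_n\to x\). Then \(e^{i\xi x_n}\to e^{i\xi x}\) pointwise in \(\xi\), and the integrands are bounded by the constant \(1\), which is \(\mu\)-integrable because \(\mu(\mathbb R)<\infty\). Dominated convergence gives \(f(x_n)\to f(x)\). This argument in fact gives uniform continuity, although only continuity is needed.

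\emph{Positive definiteness.} Fix \(x_1,\ldots,x_d\in\mathbb R\) and \(c_1,\ldots,c_d\in\mathbb C\). Because the sum is finite, summation and integration can be interchanged, which gives
\[
\sum_{p,q=1}^d\overline{c_p}c_q f(x_p-x_q)
=\int_{\mathbb R}\sum_{p,q=1}^d \overline{c_p e^{-i\xi x_p}}\,c_q e^{-i\xi x_q}\,d\mu(\xi)
=\int_{\mathbb R}\Bigl|\sum_{q=1}^d c_q e^{-i\xi x_q}\Bigr|^2 d\mu(\xi).
\]
The first equality uses \(e^{i\xi(x_p-x_q)}=\overline{e^{-i\xi x_p}}\,e^{-i\xi x_q}\). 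The last integral is nonnegative because \(\mu\) is a positive measure.

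\emph{Hermitian property.} The matrix \((f(x_p-x_q))_{p,q}\) is Hermitian because \(f(-x)=\int e^{-i\xi x}\,d\mu=\overline{f(x)}\), again using that \(\mu\) is real and positive. Together with the nonnegativity above, this shows the matrix is Hermitian positive semidefinite, as the definition requires.

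There is no real obstacle in this argument. The only points to handle with care are the interchange of the finite sum with the integral and the justification of continuity; finiteness of \(\mu\) settles both.
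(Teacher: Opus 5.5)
Your argument is correct and is essentially the paper's approach. The paper states this criterion as standard without a separate proof, but in the proof of Lemma~\ref{lem:hyperbolic-sine-kernel} it uses exactly your identity $\sum_{p,q}\overline{c_p}c_q f(x_p-x_q)=\int_{\mathbb R}\bigl|\sum_q c_q e^{-i\xi x_q}\bigr|^2\,d\mu(\xi)\ge 0$, and your checks of continuity (dominated convergence) and of the Hermitian property ($f(-x)=\overline{f(x)}$) fill in the routine details it leaves implicit.
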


\subsection{Algebraic preliminaries} We recall basic concepts from algebraic number theory and Galois theory, which are essential to our rigidity and rationality proofs.

\begin{definition}
	A complex number \(\alpha\) is called an \emph{algebraic integer} if
	it is a root of a monic polynomial in \(\mathbb Z[X]\). Its
	\emph{minimal polynomial} is the unique monic irreducible polynomial
	in \(\mathbb Q[X]\) having \(\alpha\) as a root. The other roots of
	this polynomial are called the \emph{algebraic conjugates} of
	\(\alpha\).
\end{definition}

\begin{definition}
	A complex number \(\zeta\) is called a \emph{root of unity} if \(\zeta^N=1\)
	for some \(N\in\mathbb N\).
\end{definition}

\vspace{0.2cm}

If \(K/\mathbb Q\) is a finite normal extension, then \(\operatorname{Gal}(K/\mathbb Q)\)
denotes the group of field automorphisms of \(K\) that fix
\(\mathbb Q\) pointwise. Such automorphisms preserve sums, products,
inverses, and integer powers. If \(K\) is normal and contains an
algebraic number \(\alpha\), then every algebraic conjugate of
\(\alpha\) is of the form
\[
\sigma(\alpha)
\quad
\text{for some }
\sigma\in\operatorname{Gal}(K/\mathbb Q).
\]

\vspace{0.2cm}

We end this subsection with Kronecker’s classical theorem, the key algebraic result used to deduce rationality in the proof of
Theorem~\ref{thm:finite-rationality}.

\begin{theorem}[Kronecker's Theorem]
	\label{lem:kronecker}
	Let \(\alpha\) be an algebraic integer. If every algebraic conjugate
	of \(\alpha\) has modulus at most \(1\), then either \(\alpha=0\) or
	\(\alpha\) is a root of unity. In particular, if every algebraic
	conjugate of \(\alpha\) has modulus \(1\), then \(\alpha\) is a root
	of unity.
\end{theorem}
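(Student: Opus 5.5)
The plan is the classical pigeonhole argument on the powers of \(\alpha\). Let \(n\) be the degree of \(\alpha\) and let \(\alpha_1=\alpha,\alpha_2,\ldots,\alpha_n\) be its algebraic conjugates, that is, the roots of its minimal polynomial \(m_\alpha\). The first step is to check that \(m_\alpha\in\mathbb Z[X]\). Since \(\alpha\) is an algebraic integer, \(m_\alpha\) divides some monic \(g\in\mathbb Z[X]\). By Gauss's lemma, every monic factor in \(\mathbb Q[X]\) of a monic integer polynomial already has integer coefficients, so \(m_\alpha\in\mathbb Z[X]\). In particular the elementary symmetric functions of \(\alpha_1,\ldots,\alpha_n\) are integers.

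Next, for each \(k\ge1\) I will consider the polynomial
\[
P_k(X)=\prod_{i=1}^n\bigl(X-\alpha_i^k\bigr).
\]
Its coefficients are, up to sign, the elementary symmetric functions of \(\alpha_1^k,\ldots,\alpha_n^k\). These are symmetric polynomials in \(\alpha_1,\ldots,\alpha_n\) with integer coefficients. By the fundamental theorem on symmetric polynomials, each of them is an integer polynomial in the elementary symmetric functions of the \(\alpha_i\), and hence an integer. So \(P_k\in\mathbb Z[X]\) for every \(k\).

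The hypothesis \(|\alpha_i|\le1\) gives \(|\alpha_i^k|\le1\) for all \(i\) and \(k\). Therefore the coefficient of \(X^{n-j}\) in \(P_k\) has absolute value at most \(\binom nj\). This bound does not depend on \(k\), so the polynomials \(P_k\), \(k\ge1\), range over a finite set of integer polynomials of degree \(n\). Consequently the set of all their roots is finite, and \(\alpha^k\) (a root of \(P_k\)) takes only finitely many values as \(k\) varies. By pigeonhole there are \(1\le k<m\) with \(\alpha^k=\alpha^m\). If \(\alpha\neq0\), dividing by \(\alpha^k\) gives \(\alpha^{m-k}=1\), so \(\alpha\) is a root of unity.

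The "in particular" clause follows at once. If all conjugates have modulus \(1\), then \(\alpha\neq0\), and the first part applies. The only step needing care is the integrality of the coefficients of \(P_k\), which combines Gauss's lemma with the symmetric function theorem. Everything else is elementary, so I do not expect a genuine obstacle.
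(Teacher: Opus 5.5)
The paper does not prove this statement. It is quoted in Section~\ref{sec:preliminaries} as a classical result and used as a black box in the proof of Theorem~\ref{thm:finite-rationality}, so there is no argument of the paper to compare yours against.

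Your proof is the standard pigeonhole argument, and it is complete and correct:
\begin{enumerate}
\item Gauss's lemma gives \(m_\alpha\in\mathbb Z[X]\).
\item The fundamental theorem on symmetric polynomials, which holds over \(\mathbb Z\), gives \(P_k\in\mathbb Z[X]\).
\item The coefficient bound \(\binom{n}{j}\) does not depend on \(k\), so only finitely many \(P_k\) occur.
\item Pigeonhole on the finitely many possible values of \(\alpha^k\) then finishes the argument.
\end{enumerate}

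One point deserves to be stated explicitly, and you handle it correctly by setting \(\alpha_1=\alpha\): the set of conjugates must include \(\alpha\) itself. The paper's definition calls the conjugates ``the other roots'' of the minimal polynomial, and under that literal reading the statement fails. For example, the golden ratio \((1+\sqrt{5})/2\) has only one other conjugate, \((1-\sqrt{5})/2\), which lies inside the unit disc. In degree one, any integer such as \(2\) satisfies the hypothesis vacuously. This causes no harm in the paper's application, since there \(|\sigma(z_j)|=1\) is obtained for every \(\sigma\), including the identity.
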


	\section{Fiberization of a Periodic Spectrum}
	\label{sec:fiberization}
	
	Let \(\Omega\subset\mathbb R\) be a bounded measurable spectral set
	with \(|\Omega|=1\), and let \(\Lambda\) be a spectrum of \(\Omega\)
	containing \(0\). By Theorem~\ref{thm:one-dimensional-spectrum-periodic},
	there exist \(T>0\) and a finite set \(S\subset[0,p)\) with
	\(0\in S\), such that
	\[
	\Lambda=S+T\mathbb Z.
	\]
	By the density theorem for spectra \cite{Landau1967},
	\(D(\Lambda)=|\Omega|=1\), whereas
	\[
	D(\Lambda)=\frac{\#S}{T}.
	\]
	Hence \(T=\#S\) is a positive integer. 
		We therefore write
		\begin{equation}
			\Lambda=S+T\mathbb Z,
			\qquad
			S\subset[0,T),
			\qquad
			0\in S,
			\qquad
			\#S=T.
			\label{eq:periodic-decomposition}
	\end{equation}
		Set \(I=[0,1/T)\). For \(x\in I\), define the integer fiber
	\[
	A_x
	=
	\left\{
	n\in\mathbb Z:
	x+\frac{n}{T}\in\Omega
	\right\}.
	\]
	Since \(\Omega\) is bounded, the sets \(A_x\) are finite and their
	cardinalities are uniformly bounded.

\vspace{0.2cm}

	Consider the Hilbert space
	\[
	\mathcal H
	=
	\left\{
	g:I\times\mathbb Z\to\mathbb C:
	g(x,n)=0\ \text{if }n\notin A_x,\quad
	\int_I\sum_{n\in\mathbb Z}|g(x,n)|^2\,dx<\infty
	\right\},
	\]
	with the natural norm, and define
	\[
	U:L^2(\Omega)\longrightarrow\mathcal H,
	\qquad
	(Uf)(x,n)
	=
	f\left(x+\frac{n}{T}\right),
	\quad n\in A_x.
	\]
	Extending \(f\) by zero outside \(\Omega\), the partition
	\[
	\mathbb R
	=
	\bigsqcup_{n\in\mathbb Z}
	\left[\frac{n}{T},\frac{n+1}{T}\right)
	\]
	gives
	\[
	\|Uf\|_{\mathcal H}^2
	=
	\int_I\sum_{n\in A_x}
	\left|f\left(x+\frac{n}{T}\right)\right|^2dx
	=
	\|f\|_{L^2(\Omega)}^2.
	\]
	Every point of \(\Omega\) has a unique representation
	\(x+n/T\), with \(x\in I\) and \(n\in A_x\), so \(U\) is unitary.
	For \(s\in S\), let
	\[
	v_s(x)
	=
	\left(e^{2\pi isn/T}\right)_{n\in A_x}
	\in\ell^2(A_x).
	\]
	For \(s\in S\) and \(k\in\mathbb Z\), 	since \(e^{2\pi ikn}=1\),
	\begin{equation}
		(Ue_{s+Tk})(x,n)
		=
		e^{2\pi i(s+Tk)x}e^{2\pi isn/T}.
		\label{eq:fiberized-exponential}
	\end{equation}

\vspace{0.2cm}

To analyze the rationality of spectra for bounded spectral sets on the real line, we first reduce the continuous spectral problem to a family of finite-dimensional spectral subproblems by a fiberwise decomposition technique. The upcoming fiberization theorem establishes that for any normalized bounded spectral set \(\Omega\), each fiber \(A_x\) derived from the scaling partition forms a finite spectral set almost everywhere. The proof of this fiberization theorem can be found in \cite{DutkayJorgensen}. We restate and supply its full proof here for self-containment of the manuscript. 

\vspace{0.2cm}
	
	\begin{theorem}[Fiberization theorem]
		\label{thm:fiberization}
			Let \(\Omega\subset\mathbb R\) be a bounded measurable spectral set
		with \(|\Omega|=1\). For the notations defined above and for
		 almost every \(x\in I\),
		\[
		\#A_x=T,
		\]
		and \(A_x\) is a finite spectral set with a spectrum
		\[
		\frac{S}{T}
		=
		\left\{\frac{s}{T}:s\in S\right\}
		\subset\mathbb R/\mathbb Z.
		\]
		Equivalently,
		\(
		\frac1{\sqrt T}
		\left(e^{2\pi isn/T}\right)_{n\in A_x,\ s\in S}
		\)
		is unitary.
	\end{theorem}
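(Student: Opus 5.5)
We transfer the orthogonal basis property of \(E(\Lambda)\) through the unitary map \(U\) and then decompose \(\mathcal H\) fiberwise over \(x\in I\). By \eqref{eq:fiberized-exponential}, \(Ue_{s+Tk}(x,\cdot)=e^{2\pi i(s+Tk)x}v_s(x)\). Since \(e^{2\pi iTkx}\), \(k\in\mathbb Z\), is an orthogonal basis of \(L^2(I)\) with \(|I|=1/T\), we can write \(e^{2\pi i(s+Tk)x}=e^{2\pi isx}e^{2\pi iTkx}\) and absorb the unimodular factor \(e^{2\pi isx}\) into \(v_s\). Set \(w_s(x)=e^{2\pi isx}v_s(x)\). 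The system \(\{Ue_{s+Tk}\}\) is then \(\{e^{2\pi iTkx}w_s(x)\}_{s\in S,k\in\mathbb Z}\), and the task is to show that this family is an orthogonal basis of \(\mathcal H\) with constant norms \(\|e_\lambda\|^2=|\Omega|=1\) if and only if, for a.e.\ \(x\), the vectors \(\{w_s(x)\}_{s\in S}\) form an orthogonal basis of \(\ell^2(A_x)\) with \(\|w_s(x)\|^2=T\). This is a standard Zak/fiber-type argument.

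\textbf{Orthogonality.} For \(s,s'\in S\) and \(k,k'\in\mathbb Z\),
\[
\langle Ue_{s+Tk},Ue_{s'+Tk'}\rangle=\int_I e^{2\pi iT(k-k')x}\,\langle w_s(x),w_{s'}(x)\rangle\,dx .
\]
Fix \(s\neq s'\) and let \(\phi_{ss'}(x)=\langle w_s(x),w_{s'}(x)\rangle\). This is a bounded function on \(I\), since the cardinalities \(\#A_x\) are uniformly bounded. The left side vanishes for all \(k-k'\in\mathbb Z\), so every Fourier coefficient of \(\phi_{ss'}\) with respect to the basis \(\{e^{2\pi iTmx}\}\) of \(L^2(I)\) is zero, and hence \(\phi_{ss'}=0\) a.e. For \(s=s'\), \(\phi_{ss}(x)=\#A_x\) and the integral equals \(\|e_\lambda\|^2\) for \(k=k'\), which is consistent. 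So for a.e.\ \(x\) the vectors \(w_s(x)\), \(s\in S\), are pairwise orthogonal and nonzero whenever \(A_x\neq\emptyset\), each having norm \(\sqrt{\#A_x}\). Because \(S\) is finite, we may discard a null set uniformly in \(s,s'\). Orthogonality then gives \(T=\#S\le\#A_x\) for a.e.\ \(x\) with \(A_x\ne\emptyset\).

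\textbf{Completeness.} This is the main step where care is needed. Suppose a.e.\ fiber family failed to span, i.e.\ \(\#A_x>T\) on a positive-measure set \(E\). I would choose a measurable selection \(g(x)\in\ell^2(A_x)\) of unit vectors orthogonal to all \(w_s(x)\) for \(x\in E\), and set \(g=0\) off \(E\). Measurability follows, for instance, by Gram–Schmidt on the measurably varying coordinate vectors, since \(A_x\) takes only finitely many values and each level set is measurable. Then
\[
\langle g,Ue_{s+Tk}\rangle=\int_I e^{-2\pi iTkx}\langle g(x),w_s(x)\rangle\,dx=0
\]
for all \(s,k\), while \(g\neq0\) in \(\mathcal H\). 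This contradicts completeness of \(E(\Lambda)\) together with the unitarity of \(U\).

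It remains to handle fibers with \(A_x=\emptyset\); these force \(\#A_x=T\) a.e. From the partition of \(\mathbb R\) we have \(\int_I\#A_x\,dx=|\Omega|=1=\int_I T\,dx\). Since \(\#A_x\) is either \(0\) or at least \(T\), and at most \(T\) a.e.\ by completeness, we get \(\#A_x\in\{0,T\}\) a.e. The integral identity then rules out a positive-measure set of zeros. Hence \(\#A_x=T\) a.e., and the \(T\) orthogonal vectors \(w_s(x)\) of norm \(\sqrt T\) form an orthogonal basis. Removing the unimodular scalars \(e^{2\pi isx}\) shows that \(T^{-1/2}(e^{2\pi isn/T})_{n\in A_x,s\in S}\) is unitary, which is the statement. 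The only delicate points are the measurable selection in the completeness step and handling the null sets; the latter is simple because \(S\) is finite and \(A_x\) ranges over finitely many sets.
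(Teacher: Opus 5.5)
Your argument is correct, but it reaches $\#A_x=T$ by a different route from the paper. The paper gets this from the $s=s'$ relations, which you only checked for $k=k'$. Orthogonality of $e_{s+Tk}$ and $e_{s+Tk'}$ with the \emph{same} $s$ and $k\neq k'$ gives $\int_I \#A_x\,e^{2\pi i T(k-k')x}\,dx=0$. So every nonzero Fourier coefficient of $x\mapsto\#A_x$ vanishes, $\#A_x$ is a.e.\ constant, and $\int_I\#A_x\,dx=|\Omega|=1$ forces the constant to be $T$. The $s\neq s'$ relations are then handled exactly as you handle them. The basis property is a dimension count: $\#S=T$ orthogonal vectors in the $T$-dimensional space $\ell^2(A_x)$. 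Completeness of $E(\Lambda)$ therefore enters the paper's proof only through the density identity $\#S=T$ from the setup.

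You instead use completeness directly, through a Zak-type measurable selection, to get $\#A_x\le\#S$ a.e. You combine this with $\#A_x\ge\#S$ on nonempty fibers, and then use $\int_I\#A_x\,dx=1$ together with $\#S=T$ to rule out empty fibers. This is sound; the selection is harmless because $A_x$ ranges over finitely many subsets of a fixed finite set of integers, with measurable level sets. The cost is a measurable-selection step that the paper's Fourier trick avoids entirely.

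What your route buys is an explicit use of completeness at the fiber level. Combined with the paper's constancy argument, it would make the theorem independent of Landau's density theorem. Constancy gives $\#A_x=T$ a.e., which already forces $T\in\mathbb N$. Orthogonality then gives $\#S\le T$, and your selection argument gives $\#S\ge T$.
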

	
	\begin{proof}
		Fix \(s\in S\). For two distinct integer \(k\neq k'\), the orthogonality of the exponentials
		\(e_{s+Tk}\) and \(e_{s+Tk'}\) in \(L^2(\Omega)\), together with
		\eqref{eq:fiberized-exponential}, gives
		\[
		\int_I
		\#A_x\,e^{2\pi iT(k-k')x}\,dx
		=
		0.
		\]
		Thus all nonzero Fourier coefficients of the bounded function
		\(x\mapsto\#A_x\) on \(I\) vanish, so \(\#A_x\) is constant almost
		everywhere. Moreover,
		\[
		\int_I\#A_x\,dx
		=
		\sum_{n\in\mathbb Z}
		\int_I
		\mathbf 1_\Omega\left(x+\frac nT\right)dx
		=
		|\Omega|
		=
		1.
		\]
		Since \(|I|=1/T\), it follows that
		\begin{equation}
			\#A_x=T
			\qquad\text{for almost every }x\in I.
			\label{eq:fiber-cardinality}
		\end{equation}
		
\

		Now let \(s,s'\in S\) with \(s\neq s'\). For arbitrary
		\(k,k'\in\mathbb Z\), orthogonality of 	\(e_{s+Tk}\) and \(e_{s+Tk'}\) gives
			\begin{equation}
		\int_I
		e^{2\pi i[(s-s')+T(k-k')]x}
		\sum_{n\in A_x}
		e^{2\pi i(s-s')n/T}\,dx
		=
		0.
		\label{eq:fiber-Fourier-coefficient}
		\end{equation}
	Define	
\[
		G_{s,s'}(x)
		=
		e^{2\pi i(s-s')x}
		\sum_{n\in A_x}
		e^{2\pi i(s-s')n/T}		
	\]
	Then	\(G_{s,s'}\in L^1(I)\) by \eqref{eq:fiber-cardinality}, and the preceding identity \eqref{eq:fiber-Fourier-coefficient} asserts that every Fourier coefficient of	\(G_{s,s'}(x)\) vanishes.  The uniqueness theorem for Fourier series
		therefore yields \(G_{s,s'}=0\) almost everywhere on \(I\). Since the first
		factor \(	e^{2\pi i(s-s')x}\) never vanishes,
			\begin{equation}
		\sum_{n\in A_x}e^{2\pi i(s-s')n/T}=0
			\label{eq:orthogonality}
			\end{equation}
		for almost every \(x\in I\).
		
		\
		
	Since \(S\) is finite, the relations \eqref{eq:orthogonality} and
		\eqref{eq:fiber-cardinality} hold simultaneously on a common
		full-measure subset of \(I\). Therefore
		\[
		\sum_{n\in A_x}
		e^{2\pi i(s-s')n/T}
		=
		T\delta_{s,s'},
		\qquad s,s'\in S.
		\]
		Thus the \(T\) vectors
		\[
		\frac1{\sqrt T}
		\left(e^{2\pi isn/T}\right)_{n\in A_x},
		\qquad s\in S,
		\]
		form an orthonormal family in the \(T\)-dimensional space
		\(\ell^2(A_x)\), and hence an orthonormal basis.
	\end{proof}

\section{Hyperbolic positive-definite kernels and modulus rigidity}
\label{sec:finite-rationality}

In this section, we prove Theorem~\ref{thm:concyclic-rigidity}. For a
finite spectral set \(A\subset\mathbb Z\) with \(\#A=d\), Let
\[
\Gamma=\{\gamma_0,\ldots,\gamma_{d-1}\}
\subset\mathbb R/\mathbb Z,
\quad
\gamma_0=0,
\] 
be a spectrum of \(A\). Define
\[
F_A(t)=\sum_{a\in A}e^{2\pi iat},
\quad t\in\mathbb R/\mathbb Z.
\]
Then
\[
F_A(\gamma_j-\gamma_i)=0
\qquad (i\neq j).
\]
Since \(F_A(0)=d\), this is equivalent to
\begin{equation}
	F_A(\gamma_j-\gamma_i)
	=
	d\delta_{ij},
	\quad 0\leq i,j< d.
	\label{eq:finite-orthogonality}
\end{equation}

Set
\[
z_j=e^{2\pi i\gamma_j}\in\mathbb T,
\qquad
\mathbb T=\{z\in\mathbb C:|z|=1\}.
\]
Then \eqref{eq:finite-orthogonality} becomes
\begin{equation}
	\sum_{a\in A}
	\left(\frac{z_j}{z_i}\right)^a
	=
	d\delta_{ij},
	\quad 0\leq i,j< d.
	\label{eq:finite-inverse-orthogonality}
\end{equation}
Equivalently, for
\(
H=(z_j^a)_{a\in A,\ 0\leq j<d},
\)
since \(\overline{z_i^a}=z_i^{-a}\), one has
\[
H^*H=dI_d.
\]
Thus \(d^{-1/2}H\) is unitary.

\vspace{0.2cm}

The algebraic relations
\eqref{eq:finite-inverse-orthogonality} are preserved under Galois automorphisms, although the conjugate nodes need not remain on the unit circle. This is precisely the setting of Theorem \ref{thm:concyclic-rigidity}, which asserts that any nonzero complex nodes satisfying these relations must have a common modulus.  Its proof relies on the following hyperbolic positive-definite kernel.

\begin{lemma}
	\label{lem:hyperbolic-sine-kernel}
	Let \(0<t<T\), and define
	\[
	k_{t,T}(x)
	=
	\begin{cases}
		\dfrac{\sinh(tx)}{\sinh(Tx)},&x\neq0,\\[6pt]
		\dfrac{t}{T},&x=0.
	\end{cases}
	\]
	Then \(k_{t,T}\) is a continuous positive-definite function on
	\(\mathbb R\). Equivalently, for every finite sequence
	\(x_1,\ldots,x_n\in\mathbb R\), the matrix
	\[
	\bigl(k_{t,T}(x_p-x_q)\bigr)_{p,q=1}^n
	\]
	is Hermitian positive semidefinite.
\end{lemma}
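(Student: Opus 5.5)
The plan is to realize \(k_{t,T}\) as the Fourier transform of a nonnegative integrable function and then appeal to Proposition~\ref{prop:fourier-stieltjes-positive-definite}. Continuity at \(0\) is immediate, since \(\sinh(tx)/\sinh(Tx)\to t/T\) as \(x\to0\). The function is also even and real, so the resulting matrices are real symmetric and in particular Hermitian. Positive semidefiniteness is therefore the only real content of the lemma.

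\emph{Step 1 (rescaling).} Substituting \(x\mapsto x/T\), it suffices to treat \(T=1\) and \(a:=t/T\in(0,1)\). This works because positive definiteness is preserved under dilation of the variable: the matrix \(\bigl(k(x_p-x_q)\bigr)\) with the \(x_p\) rescaled is again of the same form. So the task reduces to showing that
\[
g_a(x)=\frac{\sinh(ax)}{\sinh x}
\]
is positive definite.

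\emph{Step 2 (explicit Fourier transform).} I would use the classical integral
\[
\int_{\mathbb R}\frac{\sinh(ax)}{\sinh x}\,e^{-i\xi x}\,dx=\frac{\pi\sin(\pi a)}{\cosh(\pi\xi)+\cos(\pi a)},\qquad 0<a<1 .
\]
One derivation goes through the known transform \(\int e^{-i\xi x}\,\frac{e^{ax}}{1+e^{x}}\,dx\), which equals \(\pi/\sin(\pi(a-i\xi))\) by the Beta integral. Alternatively, one can integrate \(e^{bz}/\sinh z\) around the rectangle with vertices \(\pm R\), \(\pm R+i\pi\) and take the imaginary part. The right-hand side is strictly positive, because \(\cosh(\pi\xi)\ge1>-\cos(\pi a)\) when \(0<a<1\). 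It is also integrable, decaying like \(e^{-\pi|\xi|}\). On the other side, \(g_a\) decays like \(e^{-(1-a)|x|}\), so \(g_a\in L^1\cap C\).

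\emph{Step 3 (Fourier inversion).} Since both \(g_a\) and its transform \(\widehat{g_a}\) lie in \(L^1\), inversion gives
\[
g_a(x)=\frac1{2\pi}\int_{\mathbb R}e^{i\xi x}\,\widehat{g_a}(\xi)\,d\xi
\]
for every \(x\), using continuity. Taking \(d\mu=\frac1{2\pi}\widehat{g_a}(\xi)\,d\xi\) produces a positive finite Borel measure, and Proposition~\ref{prop:fourier-stieltjes-positive-definite} then yields positive definiteness.

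The main obstacle is Step 2: getting the closed-form transform and, more importantly, its sign correct, including the contour bookkeeping around the pole at \(i\pi\). If the contour computation becomes messy, I have a fallback route. Expand
\[
\frac{\sinh(ax)}{\sinh x}=\sum_{n\ge0}\Bigl(e^{-(2n+1-a)|x|}-e^{-(2n+1+a)|x|}\Bigr).
\]
Each term has an explicit Poisson-kernel transform. Summing these transforms gives the partial-fraction (Mittag-Leffler) expansion of \(\pi\sin(\pi a)/(\cosh\pi\xi+\cos\pi a)\). Positivity is also visible directly in this form, by pairing the terms as a telescoping difference of Lorentzians, which is decreasing in the parameter.
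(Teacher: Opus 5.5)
Your proposal is correct and is essentially the paper's argument: both write \(k_{t,T}\) as the Fourier transform of the positive integrable density \(\frac{\sin(\pi t/T)}{2T}\bigl(\cosh(\pi\xi/T)+\cos(\pi t/T)\bigr)^{-1}\) and conclude via Proposition~\ref{prop:fourier-stieltjes-positive-definite}; the only difference is that the paper computes the transform of \(1/(\cosh u+\cos a)\) by residues (Appendix A), which gives \(k_{t,T}\) directly and makes your \(L^1\) Fourier-inversion step unnecessary. One caution that affects only your fallback: the paired Lorentzian differences are not termwise positive, because \(2c/(c^2+\xi^2)\) is increasing in \(c\) for \(c<|\xi|\), so on that route positivity has to come from the closed form rather than from the series.
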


\begin{proof}
	The value at the origin is the continuous extension, since
	\[
	\lim_{x\to0}\frac{\sinh(tx)}{\sinh(Tx)}=\frac{t}{T}.
	\]
	Moreover, \(k_{t,T}\) is real-valued and even.
		In fact, we have  the Fourier integral formula
	\begin{equation}
		\int_{\mathbb R}
		\frac{e^{isu}}{\cosh u+\cos a}\,du
		=
		\frac{2\pi\sinh(as)}
		{\sin a\,\sinh(\pi s)},
		\qquad 0<a<\pi,~s\in\mathbb{R},
		\label{eq:hyperbolic-fourier-first}
	\end{equation}
	where the value at \(s=0\) is understood by continuity. For a detailed calculation of the integral \eqref{eq:hyperbolic-fourier-first}, see Appendix A.	
	In \eqref{eq:hyperbolic-fourier-first}, set
	\[
	a=\frac{\pi t}{T},
	\qquad
	u=\frac{\pi\xi}{T},
	\qquad
	s=\frac{Tx}{\pi}.
	\]
	Since \(du=(\pi/T)\,d\xi\), we obtain
	\[
	k_{t,T}(x)
	=
	\frac{\sin(\pi t/T)}{2T}
	\int_{\mathbb R}
	\frac{e^{i\xi x}}
	{\cosh(\pi\xi/T)+\cos(\pi t/T)}
	\,d\xi.
	\]
	Define
	\[
	d\mu_{t,T}(\xi)
	=
	\frac{\sin(\pi t/T)}{2T}
	\frac{d\xi}
	{\cosh(\pi\xi/T)+\cos(\pi t/T)}.
	\]
	Since \(0<t<T\), the density of \(\mu_{t,T}\) is positive, and its exponential decay
	at infinity shows that \(\mu_{t,T}\) is a positive finite Borel
	measure. Thus
	\[
	k_{t,T}(x)
	=
	\int_{\mathbb R}e^{i\xi x}\,d\mu_{t,T}(\xi)
	\]
	is a continuous positive-definite function on \(\mathbb {R}\) by Proposition \ref{prop:fourier-stieltjes-positive-definite}.
	Consequently, for arbitrary \(c_1,\ldots,c_n\in\mathbb C\),
	\[
	\begin{aligned}
		\sum_{p,q=1}^n
		\overline{c_p}c_q\,k_{t,T}(x_p-x_q)
		&=
		\int_{\mathbb R}
		\left|
		\sum_{q=1}^n c_qe^{-i\xi x_q}
		\right|^2
		\,d\mu_{t,T}(\xi)\geq0.
	\end{aligned}
	\]
	Therefore the matrix \((k_{t,T}(x_p-x_q))_{p,q=1}^n\) is Hermitian positive semidefinite.
\end{proof}

\vspace{0.2cm}

The hyperbolic positive-definite kernel constructed in Lemma \ref{lem:hyperbolic-sine-kernel} will serve as a key analytical obstruction tool for our subsequent rigidity argument. Using this kernel family, We now proceed to prove Theorem \ref{thm:concyclic-rigidity}.

\vspace{0.2cm}
 
\begin{proof}[\bf Proof of Theorem~\ref{thm:concyclic-rigidity}]
	Define the \(d\times d\) matrices
	\[
	M=(w_j^a)_{a\in A,\,1\le j\le d},
	\qquad
	M^\vee=(w_j^{-a})_{a\in A,\,1\le j\le d}.
	\]
	Condition \eqref{eq:inverse-orthogonality} is equivalent to
	\[
	(M^\vee)^TM=dI_d.
	\]
	Since \(M\) is a square matrix, \(d^{-1}(M^\vee)^T\) is also its right inverse.
	Thus
	\[
	M(M^\vee)^T=dI_d,
	\]
	and taking the \((a,b)\)-entry of the product gives
	\begin{equation}
		\sum_{j=1}^d w_j^{a-b}
		=
		d\delta_{ab},
		\quad a,b\in A.
		\label{eq:dual-power-sums}
	\end{equation}
We proceed by contradiction. To clarify the argument, we complete the proof in two steps. 

\

\noindent{\bf Step 1: Modulus stratification and layer matrix spectral analysis}

\

	Multiplying all \(w_j\) by the same positive constant does not change
	\eqref{eq:inverse-orthogonality}. We may therefore assume that their
	smallest modulus is \(1\). Write the distinct moduli as
	\[
	1=e^{t_1}<e^{t_2}<\cdots<e^{t_s},
	\qquad
	0=t_1<t_2<\cdots<t_s=T.
	\]
	If \(s=1\), the conclusion is immediate. Suppose that \(s\ge2\). Set
	\[
	I_r=\{j:|w_j|=e^{t_r}\},
	\qquad
	m_r= \# I_r,
	\qquad
	\sum_{r=1}^s m_r=d.
	\]
	For \(j\in I_r\), write
	\(
	w_j=e^{t_r}u_j\) with \(
 |u_j|=1.
	\)
	Define
	\[
	E_r=(u_j^a)_{a\in A,\,j\in I_r},
	\qquad
	G_r=E_rE_r^*.
	\]
	where $E_r$ is a $d \times m_r$ matrix and $G_r$ is a $d \times d$ matrix.  
	
	If \(i,j\in I_r\), then \(w_j/w_i=u_j/u_i\), and hence
	\[
	(E_r^*E_r)_{ij}
	=
	\sum_{a\in A}\overline{u_i^a}u_j^a
	=
	\sum_{a\in A}
	\left(\frac{u_j}{u_i}\right)^a
	=
	d\delta_{ij}.
	\]
	Therefore
	\begin{equation}
		E_r^*E_r=dI_{m_r}.
		\label{eq:Er-orthogonality}
	\end{equation}
	The matrix \(G_r\) is Hermitian positive semidefinite, since for every
	\(x\in\mathbb C^d\),
	\[
	x^*G_rx
	=
	x^*E_rE_r^*x
	=
	\|E_r^*x\|^2
	\ge0.
	\]
	Moreover,
	\[
	G_r^2
	=
	E_r(E_r^*E_r)E_r^*
	=
	dE_rE_r^*
	=
	dG_r.
	\]
	Thus every eigenvalue \(\lambda\) of \(G_r\) satisfies
	\[
	\lambda^2=d\lambda,
	\]
	so \(\lambda\in\{0,d\}\).
	Equation \eqref{eq:Er-orthogonality} shows that the \(m_r\) columns of
	\(E_r\) are linearly independent, and hence
	\(
	\operatorname{rank}E_r=m_r.
	\)
	We also have
	\begin{equation}
	\operatorname{rank}G_r
	=
	\operatorname{rank}E_r
	=
	m_r.
	\label{eq:Gr-rank}
	\end{equation}
	Consequently, \(G_r\) has exactly \(m_r\) nonzero eigenvalues, counted
	with multiplicity, and all of them are equal to \(d\). Furthermore,
	\begin{equation}
		(G_r)_{aa}
		=
		\sum_{j\in I_r}|u_j^a|^2
		=
		m_r,
		\quad a\in A.
		\label{eq:Gr-diagonal}
	\end{equation}
	For \(n\in\mathbb Z\), let
	\(
	U_r(n)=\sum_{j\in I_r}u_j^n.
	\)
	Then
	\begin{equation}
		(G_r)_{ab}=U_r(a-b),
		\quad a,b\in A.
		\label{eq:Gr-power-sums}
	\end{equation}
	Fix \(a,b\in A\) with \(a\ne b\), and set \(n=a-b\ne0\).
	By \eqref{eq:dual-power-sums},
	\begin{equation}
		\sum_{r=1}^s e^{t_rn}U_r(n)=0.
		\label{eq:positive-radial-relation}
	\end{equation}
Note that \(
U_r(-n)=\overline{U_r(n)}.
\)	Interchanging \(a\) and \(b\), then taking complex conjugates, gives
	\begin{equation}
		\sum_{r=1}^s e^{-t_rn}U_r(n)=0,
		\label{eq:negative-radial-relation}
	\end{equation}
	Subtracting \eqref{eq:negative-radial-relation} from
	\eqref{eq:positive-radial-relation}, and using \(t_1=0\) and \(t_s=T\),
	we obtain
	\begin{equation}
		U_s(n)
		=
		-\sum_{r=2}^{s-1}
		\frac{\sinh(t_rn)}{\sinh(Tn)}\,U_r(n).
		\label{eq:top-layer-relation}
	\end{equation}

\

\noindent {\bf Step 2: Unified contradiction for all \(s\ge 2\)}.

\

	If \(s=2\), the sum on the right-hand side of \eqref{eq:top-layer-relation} is empty. Hence
	\[
	U_2(a-b)=0
	\quad
	(a,b\in A,\ a\ne b).
	\]
	This together with \eqref{eq:Gr-power-sums} and
	\eqref{eq:Gr-diagonal} gives
\(
	G_2=m_2I_d,
	\)	
 	which contradicts with \(\rank(G_2)=m_2\) in \eqref{eq:Gr-rank}. Therefore \(s=2\) is impossible.
 	
 	\
 	
	Suppose now that \(s\ge3\). For \(2\le r\le s-1\), define
	\[
	C_r
	=
	\bigl(k_{t_r,T}(a-b)\bigr)_{a,b\in A},
	\]
	where
	\[
	k_{t,T}(x)
	=
	\begin{cases}
		\dfrac{\sinh(tx)}{\sinh(Tx)},&x\ne0,\\[6pt]
		\dfrac{t}{T},&x=0.
	\end{cases}
	\]
	By Lemma~\ref{lem:hyperbolic-sine-kernel}, each \(C_r\) is Hermitian positive
	semidefinite.
	Consider
	\[
	L
	=
	G_s+\sum_{r=2}^{s-1}C_r\circ G_r,
	\]
	where \(\circ\) denotes the Hadamard product. If \(a\ne b\), then by
	\eqref{eq:Gr-power-sums} and \eqref{eq:top-layer-relation},
	\[
	\begin{aligned}
		L_{ab}
		&=
		U_s(a-b)
		+
		\sum_{r=2}^{s-1}
		\frac{\sinh(t_r(a-b))}
		{\sinh(T(a-b))}
		\,U_r(a-b)=0.
	\end{aligned}
	\]
	Thus every off-diagonal entry of \(L\) is zero.
	For a diagonal entry, \eqref{eq:Gr-diagonal} and
	\(k_{t_r,T}(0)=t_r/T\) give
	\[
	(C_r\circ G_r)_{aa}
	=
	(C_r)_{aa}(G_r)_{aa}
	=
	\frac{t_r}{T}m_r.
	\]
	In particular, the diagonal entries of \(C_r\circ G_r\) are generally
	not zero. Hence
	\[
	L_{aa}
	=
	m_s+\sum_{r=2}^{s-1}\frac{t_r}{T}m_r
	=:\lambda,
	\quad a\in A.
	\]
	It follows that
	\begin{equation}
		G_s+\sum_{r=2}^{s-1}C_r\circ G_r
		=
		\lambda I_d.
		\label{eq:matrix-layer-identity}
	\end{equation}
	By the Schur product theorem,
	\[
	C_r\circ G_r\succeq0.
	\]
	Therefore \eqref{eq:matrix-layer-identity} implies
	\[
	\lambda I_d-G_s
	=
	\sum_{r=2}^{s-1}C_r\circ G_r
	\succeq0.
	\]
	Since \(m_s\ge1\), the matrix \(G_s\) has \(d\) as an eigenvalue.
	Taking a corresponding unit eigenvector \(v\), we obtain
	\[
	0
	\le
	v^*(\lambda I_d-G_s)v
	=
	\lambda-d,
	\]
	and hence
	\(
	\lambda\ge d.
	\)
	On the other hand, since \(0<t_r/T<1\),
	\[
	\lambda
	\le
	m_s+\sum_{r=2}^{s-1}m_r
	=
	d-m_1
	<
	d,
	\]
	which is a contradiction.
	Thus \(s\ge2\) is impossible, and all \(w_j\) have the same modulus.
\end{proof}

	\section{Proof of Theorem~\ref{thm:finite-rationality} and Theorem~\ref{thm:bounded-rationality}}
	\label{sec:bounded-rationality}

With the modulus rigidity result for orthogonal exponential systems in Theorem \ref{thm:concyclic-rigidity} established, we now return to prove our main structural theorem for spectra of finite integer sets, Theorem \ref{thm:finite-rationality}. The rigidity property from Theorem \ref{thm:concyclic-rigidity} will be combined with basic Galois theory and Kronecker’s classical theorem on algebraic integers to force every spectral element to be rational modulo one.
We can now complete the proof of the finite rationality theorem.

\vspace{0.2cm}

\begin{proof}[\bf Proof of Theorem~\ref{thm:finite-rationality}]
	The case \(d=1\) is trivial. Assume \(d\geq2\), and translate \(A\)
	by an integer so that \(\min A=0\). This translation does not alter the zero set
	of \(F_A\), since for \(A'=A-c\),
	\[
	F_{A'}(t)=e^{-2\pi ict}F_A(t).
	\]
	Define the generating polynomial 
	\[
	P_A(X)=\sum_{a\in A}X^a,
	\]
which	is a monic polynomial in \(\mathbb Z[X]\) with constant term \(1\) since \(0\in A\).	
Set \(z_j=e^{2\pi i \gamma_j}\in \mathbb{T}\) for \(0\le j<d\).	Note that \(z_0=1\). Taking \(i=0\) in
	\eqref{eq:finite-inverse-orthogonality}, we obtain
	\[
	P_A(z_j)=0,
	\quad j\neq0.
	\]
	Hence every \(z_j\) is an algebraic integer by definition.

\	

	Let \(K\) denote the normal closure (splitting field) of
	\(\mathbb Q(z_0,\ldots,z_{d-1})\) over \(\mathbb Q\). By definition of a normal extension, every irreducible polynomial over  \(\mathbb Q\) with one root in \(K\) factors completely into linear terms inside \(K\). In particular, all algebraic conjugates of any element of \(\mathbb Q(z_0,\ldots,z_{d-1})\) are contained in \(K\). Take any automorphism
	\(
	\sigma\in\operatorname{Gal}(K/\mathbb Q),
	\)
	the Galois group of the normal extension \(K/\mathbb Q\). Define \(w_j=\sigma(z_j)\). Field automorphisms preserve finite sums, integer powers and complex multiplication, so applying \(\sigma\) to
both sides of	\eqref{eq:finite-inverse-orthogonality} gives
	\[
	\sum_{a\in A}
	\left(\frac{w_j}{w_i}\right)^a
	=
	d\delta_{ij}, \quad 0\le i, j<d.
	\]
	By Theorem~\ref{thm:concyclic-rigidity},
	\(
	|w_0|=\cdots=|w_{d-1}|.
	\)
	Since
	\[
	w_0=\sigma(z_0)=\sigma(1)=1,
	\]
	we have
	\(
	|\sigma(z_j)|=1
	\)
	for every \(j\) and every
	\(\sigma\in\operatorname{Gal}(K/\mathbb Q)\).
	
	\
	
Since \(K/\mathbb Q\) is normal, Galois group \(\operatorname{Gal}(K/\mathbb Q)\) acts transitively on the roots of the minimal polynomial of any \(x\in \mathbb Q(z_0,\ldots,z_{d-1})\) (For full justification of this algebraic argument, see Appendix B). Thus every algebraic conjugate of
	\(z_j\) takes the form \(\sigma(z_j)\) for some \(\sigma\in\operatorname{Gal}(K/\mathbb Q)\). Recall \(
	|\sigma(z_j)|=1
	\) for all 	\(\sigma\in\operatorname{Gal}(K/\mathbb Q)\), so every conjugate of
	\(z_j\) lies on the unit circle .  Kronecker's theorem implies that
	\(z_j\) is a root of unity. Hence there exists \(N_j\in\mathbb N\)
	such that
	\(
	z_j^{N_j}=1,
	\)
	or equivalently,
	\[
	N_j\gamma_j\equiv 0 \pmod {1}.
	\]
	Therefore \(\gamma_j\in\mathbb Q/\mathbb Z\) for every \(j\).
	Taking
	\[
	N=\operatorname{lcm}(N_0,\ldots,N_{d-1}),
	\]
	we conclude that
	\(
	\Gamma\subset\frac1N\mathbb Z/\mathbb Z.
	\)
\end{proof}	

\vspace{0.2cm}

	We now combine the fiberization theorem with the finite-dimensional
	rationality result to prove Theorem~\ref{thm:bounded-rationality}.

\vspace{0.2cm}
	
	\begin{proof}[\bf Proof of Theorem~\ref{thm:bounded-rationality}]
		As established in Section~\ref{sec:fiberization}, there exist an
		integer \(T\geq1\) and a finite set \(S\subset[0,T)\) with
		\(0\in S\) such that
		\[
		\Lambda=S+T\mathbb Z.
		\]
		By Theorem~\ref{thm:fiberization}, for almost every
		\(x\in[0,1/T)\), the finite set
		\[
		A_x
		=
		\left\{
		n\in\mathbb Z:
		x+\frac{n}{T}\in\Omega
		\right\}
		\]
		has a spectrum
		\[
		\frac{S}{T}
		=
		\left\{
		\frac{s}{T}:s\in S
		\right\}
		\subset\mathbb R/\mathbb Z.
		\]
		Since \(0\in S/T\), Theorem~\ref{thm:finite-rationality} yields
		\[
		\frac{S}{T}\subset\mathbb Q/\mathbb Z.
		\]
		As \(S/T\subset[0,1)\), it follows that \(s/T\in\mathbb Q\) for every
		\(s\in S\), and hence \(S\subset\mathbb Q\). Since \(T\in\mathbb N\),
		we conclude that
		\[
		\Lambda=S+T\mathbb Z\subset\mathbb Q.
		\]
	\end{proof}
	
	\begin{remark}
		The condition \(0\in\Lambda\) is only a normalization. Indeed, if
		\(\lambda_0\in\Lambda\), then \(\Lambda-\lambda_0\) is again a
		spectrum of \(\Omega\), and therefore
		\[
		\Lambda-\lambda_0\subset\mathbb Q.
		\]
		More generally, 
		for a bounded spectral set \(\Omega\) of arbitrary positive measure, this yields
		\[
		|\Omega|(\Lambda-\lambda_0)\subset\mathbb Q
		\quad(\lambda_0\in\Lambda).
		\]
	\end{remark}

\

\appendix
\section{A Fourier integral fomula}

\begin{lemma}
	For \( 0<a<\pi\) and \(s\in\mathbb{R}\), we have
	\begin{equation}
		\int_{\mathbb R}
		\frac{e^{isu}}{\cosh u+\cos a}\,du
		=
		\frac{2\pi\sinh(as)}
		{\sin a\,\sinh(\pi s)},
		\label{eq:hyperbolic-fourier}
	\end{equation}
	where the value at \(s=0\) is understood by continuity.
\end{lemma}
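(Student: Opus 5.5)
We compute the integral by contour integration over a rectangle, using the fact that $1/(\cosh u+\cos a)$ is periodic up to sign under $u\mapsto u+2\pi i$. Both sides are even in $s$ and continuous, so it suffices to treat $s\neq 0$, with $s=0$ following by continuity (the limit of the right side is $2a/\sin a$). Set
\[
f(u)=\frac{e^{isu}}{\cosh u+\cos a}.
\]
Since $\cosh(u+i\pi)=-\cosh u$, the poles of $f$ solve $\cosh u=-\cos a=\cos(\pi-a)$, that is, $u=\pm i(\pi-a)+2\pi i k$ for $k\in\Z$.

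First, integrate $f$ over the boundary of the rectangle with vertices $\pm R$ and $\pm R+2\pi i$. The top edge gives $e^{-2\pi s}$ times the real-line integral $J(s)$, because $\cosh(u+2\pi i)=\cosh u$. The vertical edges vanish as $R\to\infty$, since the denominator grows like $e^{R}/2$ while the numerator stays bounded. Hence
\[
(1-e^{-2\pi s})J(s)=2\pi i\sum \operatorname{Res}.
\]

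Second, compute the residues at the two poles inside the strip $0<\operatorname{Im}u<2\pi$, namely $u_1=i(\pi-a)$ and $u_2=i(\pi+a)$. The residue at a simple pole is $e^{isu_k}/\sinh u_k$. We have $\sinh(i(\pi\mp a))=i\sin(\pi\mp a)=\pm i\sin a$, so the residues are
\[
\frac{e^{-s(\pi-a)}}{i\sin a}\quad\text{and}\quad -\frac{e^{-s(\pi+a)}}{i\sin a}.
\]
Their sum, multiplied by $2\pi i$, equals
\[
\frac{2\pi e^{-\pi s}\bigl(e^{as}-e^{-as}\bigr)}{\sin a}=\frac{4\pi e^{-\pi s}\sinh(as)}{\sin a}.
\]

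Third, divide by $1-e^{-2\pi s}=2e^{-\pi s}\sinh(\pi s)$. This gives
\[
J(s)=\frac{2\pi\sinh(as)}{\sin a\,\sinh(\pi s)},
\]
which is the claim.

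The only delicate points are checking that both poles are simple and lie strictly inside the strip, which holds because $0<a<\pi$, and justifying that the vertical-edge contributions vanish. Neither is a serious obstacle. A routine but error-prone step is the sign bookkeeping in the residues; I would double-check it against the $s\to0$ limit $2a/\sin a$, which one can verify independently by elementary integration.
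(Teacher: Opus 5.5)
Your proposal is correct and follows essentially the same route as the paper: the rectangle with vertices $\pm R$, $\pm R+2\pi i$, the same two simple poles $i(\pi\mp a)$ with the same residues, division by $1-e^{-2\pi s}=2e^{-\pi s}\sinh(\pi s)$, and $s=0$ by continuity. The paper treats $s>0$ and gets $s<0$ from evenness, and it justifies continuity of the left side by dominated convergence; note also that $\cosh u$ is exactly $2\pi i$-periodic (the sign change is under $u\mapsto u+i\pi$), which is the identity you actually use.
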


\begin{proof}

	Assume first that \(s>0\), and consider 
	\[
	\ f(z)=\frac{e^{isz}}{\cosh z+\cos a}
	\]
	integrated over the positively oriented rectangular contour with vertices 
	\[
	-R,\quad R,\quad R+2\pi i,\quad -R+2\pi i.
	\]
	The two poles of \(f\) in the strip \(0<\operatorname{Im}z<2\pi\) are
	\[
	z_1=i(\pi-a),
	\qquad
	z_2=i(\pi+a),
	\]
	and they are simple. Their residues are
	\[
	\operatorname{Res}(f;z_1)
	=
	\frac{e^{-s(\pi-a)}}{i\sin a},
	\qquad
	\operatorname{Res}(f;z_2)
	=
	-\frac{e^{-s(\pi+a)}}{i\sin a}.
	\]	
	Let
	\[
	I_R(s)=\int_{-R}^{R}
	\frac{e^{isx}}{\cosh x+\cos a}\,dx.
	\]
	denote the integral over the real segment of the contour.
	On the
	upper horizontal edge \(z=x+2\pi i\),  using the identities 
	\[
	\cosh(x+2\pi i)=\cosh x,
	\qquad
	e^{is(x+2\pi i)}=e^{-2\pi s}e^{isx}.
	\]
	and noting that the orientation is
	from \(R+2\pi i\) to \(-R+2\pi i\), we obtain
	\[
	\int_{\mathrm{upper}}f(z)\,dz
	=
	-e^{-2\pi s}I_R(s),
	\]
	For the vertical sides \(z=\pm R+iy\) with \(0\leq y\leq2\pi\).
	Since
	\[
	|e^{isz}|=e^{-sy}\leq1
	\]
	and
	\[
	|\cosh(R+iy)|^2
	=
	\sinh^2R+\cos^2y
	\geq\sinh^2R,
	\]
	we have
	\[
	|\cosh(\pm R+iy)+\cos a|
	\geq \sinh R-1.
	\]
	Hence each vertical integral is bounded in modulus by
	\(
	\frac{2\pi}{\sinh R-1},
	\)
	and therefore tends to zero as \(R\to\infty\).	
	The residue theorem now gives
	\[
	(1-e^{-2\pi s})
	\int_{\mathbb R}
	\frac{e^{isu}}{\cosh u+\cos a}\,du
	=
	2\pi i
	\left(
	\frac{e^{-s(\pi-a)}}{i\sin a}
	-
	\frac{e^{-s(\pi+a)}}{i\sin a}
	\right),
	\]
	that is,
	\[
	(1-e^{-2\pi s})
	\int_{\mathbb R}
	\frac{e^{isu}}{\cosh u+\cos a}\,du
	=
	\frac{4\pi e^{-\pi s}\sinh(as)}{\sin a}.
	\]
	Formula \eqref{eq:hyperbolic-fourier} follows for \(s>0\) since 
	\(
	1-e^{-2\pi s}
	=
	2e^{-\pi s}\sinh(\pi s)
	\). The case
	\(s<0\) follows by replacing \(u\) with \(-u\), since both sides are
	even functions of \(s\).
	For \(s=0\), note that
	\[
	\left|
	\frac{e^{isu}}{\cosh u+\cos a}
	\right|
	\leq
	\frac{1}{\cosh u+\cos a},
	\]
	and the function on the right is integrable on \(\mathbb R\), since
	it decays exponentially as \(|u|\to\infty\). Thus, by the dominated
	convergence theorem,
	\[
	\lim_{s\to0}
	\int_{\mathbb R}
	\frac{e^{isu}}{\cosh u+\cos a}\,du
	=
	\int_{\mathbb R}
	\frac{du}{\cosh u+\cos a}.
	\]
	On the other hand,
	\[
	\lim_{s\to0}
	\frac{2\pi\sinh(as)}
	{\sin a\,\sinh(\pi s)}
	=
	\frac{2a}{\sin a}.
	\]
	Hence \eqref{eq:hyperbolic-fourier} also holds at \(s=0\) by
	continuous extension.
	
\end{proof}

	\section{Galois automorphisms for orthogonality identities}
	
All standard foundational results of Galois theory referenced below can be found in \cite{Lang2002}. This appendix provides targeted, self-contained justification for the algebraic manipulations used in the proof of Theorem \ref{thm:finite-rationality}, intended for readers whose primary research background lies in harmonic analysis. We omit redundant textbook-style general definitions and focus only on statements directly required for our spectral rigidity argument.

	\subsection{Invariance properties of Galois automorphisms}
	Let $K/\Q$ be a field extension and  
	$\sigma\in\Gal(K/\Q)$. By definition,   $\sigma$ is a $\Q$-fixing field automorphism, hence it preserves addition, multiplication, and rational scalars. Standard field automorphism properties imply 
	\begin{equation*}\label{eq:sigma-quotient}
		\sigma\left(\frac{x}{y}\right)
		=\frac{\sigma(x)}{\sigma(y)} \quad (y\ne 0), 
		\ \ \   \ \ \sigma(x^m)=\sigma(x)^m
		\quad(m\in\Z,\ x\ne0\text{ if }m<0).
	\end{equation*}
Consequently, applying \(\sigma\) to any finite sum identity with rational coefficients preserves equality. In particular, the orthogonality relation
\begin{equation*}
	\sum_{a\in A}
	\left(\frac{z_j}{z_i}\right)^a
	=
	d\delta_{ij},
	\qquad 0\leq i,j< d.
\end{equation*}
remains valid after substituting \(w_k=\sigma(z_k)\).

\subsection{Normal closure and realization of algebraic conjugates}

The Galois invariance argument for orthogonality relations and the realization of algebraic conjugates via Galois automorphisms below follow the framework introduced by Konyagin and \L aba \cite{KonyaginLaba2003} for integer spectral sets, whose arguments were restricted to irreducible polynomials. In contrast, our analysis applies to general polynomials without irreducibility constraints. 

\vspace{0.2cm}

Let \(A\subset \mathbb{Z}\) and \(\Gamma=\{\gamma_0,\dots,\gamma_{d-1}\}\) be as in Theorem~\ref{thm:finite-rationality}, and set \(z_j := e^{2\pi i \gamma_j}\in \mathbb{T}\). Denote \(m_{z_j}(x) \in \mathbb{Q}[x]\) be the minimal polynomial of \(z_j\) over \(\mathbb{Q}\). Define
\[
K := \mathrm{Spl}\!\left(\prod_{k=0}^{d-1} m_{z_k}(x)\right),
\]
the splitting field  over \(\mathbb{Q}\) of the product of minimal polynomials for all \(z_k\).	The field $K/\Q$  is finite and normal, and equals the normal closure of \( \mathbb{Q}(z_0,\dots,z_{d-1})\).
	
	\begin{lemma}[Transitive action of Galois group on roots]
		\label{lem:main}
		 Fix \(0\le j<d\), and let \(m_{z_j}(x) \in \mathbb{Q}[x]\) denote the minimal polynomial of \(z_j\) over \(\mathbb{Q}\). The Galois group  \(\mathrm{Gal}(K/\mathbb{Q})\) acts transitively on the root set of each irreducible factor \(m_{z_k}(x)\) of \(\prod_{k=0}^{d-1} m_{z_k}(x)\). In particular, for any algebraic conjugate 
		 \(\beta\) of \(z_j\) over \(\mathbb{Q}\),  there exists an automorphism
		\(
		\sigma \in \mathrm{Gal}(K/\mathbb{Q})
		\)
		such that \(\sigma(z_j) = \beta\).
		\end{lemma}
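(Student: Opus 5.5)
The plan is the standard two-step argument: an isomorphism of simple extensions, followed by extension of isomorphisms to splitting fields. Fix \(j\), and let \(\beta\) be a root of \(m_{z_j}(x)\). Since \(m_{z_j}\) divides \(\prod_k m_{z_k}\) and \(K\) is the splitting field of that product, we have \(\beta\in K\). Irreducibility of \(m_{z_j}\) over \(\mathbb Q\) gives two isomorphisms
\[
\mathbb Q(z_j)\cong \mathbb Q[x]/(m_{z_j}(x))\cong \mathbb Q(\beta).
\]
Composing them yields a field isomorphism \(\tau:\mathbb Q(z_j)\to\mathbb Q(\beta)\) that fixes \(\mathbb Q\) and sends \(z_j\mapsto\beta\). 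Concretely, \(\tau\) is evaluation of polynomial representatives, and it is well defined because both \(z_j\) and \(\beta\) have the same minimal polynomial.

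Next we extend \(\tau\) to all of \(K\). Put \(f=\prod_{k=0}^{d-1}m_{z_k}\in\mathbb Q[x]\). Then \(K\) is a splitting field of \(f\) over \(\mathbb Q(z_j)\), because it is generated over \(\mathbb Q\), and hence over \(\mathbb Q(z_j)\), by the roots of \(f\). Likewise \(K\) is a splitting field of \(\tau(f)=f\) over \(\mathbb Q(\beta)\); here \(\tau(f)=f\) since \(f\) has rational coefficients. The isomorphism extension theorem for splitting fields (Lang, Ch.~V) then supplies an isomorphism \(\sigma:K\to K\) extending \(\tau\). It is proved by induction on \([K:\mathbb Q(z_j)]\): adjoin one root of an irreducible factor at a time and match it with a root of the corresponding \(\tau\)-image factor. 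The map \(\sigma\) fixes \(\mathbb Q\) and is an automorphism of \(K\), so \(\sigma\in\operatorname{Gal}(K/\mathbb Q)\) and \(\sigma(z_j)=\beta\).

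Transitivity on the roots of any irreducible factor \(m_{z_k}\) follows in the same way. Given two roots \(\alpha,\alpha'\) of \(m_{z_k}\), run the argument above with \(z_j\) replaced by \(\alpha\) and \(\beta\) replaced by \(\alpha'\); this uses only that \(\alpha\) and \(\alpha'\) share the irreducible polynomial \(m_{z_k}\).

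The main obstacle is the extension step. One must check that \(K\) is simultaneously a splitting field of \(f\) over both \(\mathbb Q(z_j)\) and \(\mathbb Q(\beta)\), and that the extension lands in \(K\) itself rather than in some other copy. The first point holds because adjoining \(z_j\) or \(\beta\), both already roots of \(f\), does not change the field generated by all the roots of \(f\). The second point follows from the uniqueness of splitting fields up to isomorphism: the image of \(\sigma\) is generated by the roots of \(f\) in \(K\), so it equals \(K\). In characteristic zero separability is automatic, so no further care is needed.
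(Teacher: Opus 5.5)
Your proposal is correct and follows essentially the paper's route: build the $\mathbb{Q}$-isomorphism $\mathbb{Q}(z_j)\to\mathbb{Q}(\beta)$ with $z_j\mapsto\beta$, then extend it to all of $K$. The only difference is the form of the extension theorem. You use the splitting-field version, which gives an automorphism $K\to K$ in one step, whereas the paper extends to an embedding $K\to\overline{\mathbb{Q}}$ and then uses normality of $K$ and a dimension count to show the image is all of $K$.
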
	
		
		\begin{proof}
			Since \(K\) is the splitting field of \(\prod_{k=0}^{d-1} m_{z_k}(x)\) over \(\mathbb{Q}\), each minimal polynomial \(m_{z_k}\) splits completely in \(K[x]\). In particular, the conjugate \(\beta\) of \(z_j\) is a root of \(m_{z_j}\), hence \(\beta \in K\).
			Define a map 
			\[
			\tau: \mathbb{Q}(z_j) \longrightarrow \overline\Q, \qquad \tau(f(z_j)) := f(\beta), \quad \forall f(x) \in \mathbb{Q}[x].
			\]
		 This map is well-defined. Suppose \(f(z_j)=g(z_j)\), then \((f-g)(z_j)=0\). Since \(m_{z_j}\) is the minimal polynomial of \(z_j\), we have \(m_{z_j}\) divides \(f-g\). As \(\beta\) is also a root of \(m_{z_j}\), it follows that \((f-g)(\beta)=0\), hence \(f(\beta)=g(\beta)\). The map clearly preserves addition and multiplication, and sends \(1\) to \(1\). It is injective. If \(\tau(f(z_j))=0\), then \(f(\beta)=0\). Since \(z_j\) and \(\beta\) share the same minimal polynomial \(m_{z_j}\), we have \(m_{z_j} \mid f\), so \(f(z_j)=0\) in \(\mathbb{Q}(z_j)\). Thus \(\tau\) is a \(\mathbb{Q}\)-embedding.
		 
		 \
			
			The extension \(K/\mathbb{Q}(z_j)\) is finite since \(K/\mathbb{Q}\) is finite as a splitting field of a polynomial over \(\mathbb{Q}\). By the standard Isomorphism Extension Theorem, we can extend \(\tau\) to an embedding
			\[
			\widetilde{\tau}: K \longrightarrow \overline{\mathbb{Q}}.
			\]
			This extension fixes \(\mathbb{Q}\), restricts to \(\tau\) on \(\mathbb{Q}(z_j)\), and in particular satisfies \(\widetilde{\tau}(z_j)=\tau(z_j)=\beta\).
			
			\
			
			For every $x\in K$, the minimal polynomial of $x$ over $\Q$ splits completely in $K$ due to nomality of \(K/\mathbb{Q}\).  The image $\widetilde{\tau}(x)$ is another root of this polynomial, so $\widetilde{\tau}(x)\in K$.  Hence $\widetilde{\tau}(K)\subseteq K$. We have therefore obtained an injective field homomorphism
			\[
			\widetilde{\tau}: K \longrightarrow K
			\]
			fixing \(\mathbb{Q}\) pointwise. Since \(K/\mathbb{Q}\) is finite, \(K\) is a finite-dimensional vector space over \(\mathbb{Q}\). The map \(\widetilde{\tau}\) is an injective \(\mathbb{Q}\)-linear transformation from this space to itself. By the rank-nullity theorem, an injective linear map on a finite-dimensional vector space is necessarily surjective. Hence \(\widetilde{\tau}\) is bijective. Being a bijective field homomorphism fixing \(\mathbb{Q}\), it is a field automorphism of \(K\) over \(\mathbb{Q}\) by definition. Thus
			\[
			\widetilde{\tau} \in \mathrm{Gal}(K/\mathbb{Q}).
			\]
		Setting \(\sigma=\widetilde{\tau}\), we obtain  \(\sigma(z_j)=\beta\). This confirms the transitive action of \(\mathrm{Gal}(K/\mathbb{Q})\) on the roots of the irreducible polynomial \(m_{z_j}\).
		
		\end{proof}

\subsection{Consequence for unit-circle containment of algebraic conjugates}	
Recall that $z_0=1$, so \(\sigma(z_0)=\sigma(1)=1\) for every 	\(\sigma\in\operatorname{Gal}(K/\mathbb Q)\). Theorem~\ref{thm:concyclic-rigidity}  in the main text guarantees 
	\(
	|w_0|=\cdots=|w_{d-1}|
	\)
	for \(w_k=\sigma(z_k)\), which implies that
	\(
	|\sigma(z_j)|=1
	\)
	for every \(j\) and every
	\(\sigma\in\operatorname{Gal}(K/\mathbb Q)\).

	By Lemma \ref{lem:main}, every algebraic conjugate of
	\(z_j\) is of the form \(\sigma(z_j)\) for some \(\sigma\in\operatorname{Gal}(K/\mathbb Q)\). Therefore, every conjugate of
	\(z_j\) lies on the unit circle \(\mathbb{T}\). Kronecker's classical theorem on algebraic integers with all conjugates on the unit circle forces each
	\(z_j\) to be a root of unity, completing the algebraic step of Theorem \ref{thm:finite-rationality}.

\vspace{0.2cm}
	
\noindent\textbf{Acknowledgements}
The authors are grateful to Professor Chun-Kit Lai from San Francisco State University for numerous insightful conversations and detailed revision suggestions throughout the preparation of this manuscript, as well as for bringing the references \cite{BoseMadan2018,  DutkayJorgensen, KonyaginLaba2003,Lang2002} to our attention. The research is supported by the NSFC grant 12271194.

	\begingroup
	\small
\begin{thebibliography}{99}
	
		\bibitem{BoseMadan2011}
	D.~Bose and S.~Madan,
	Spectrum is periodic for \(n\)-intervals,
	\emph{J. Funct. Anal.} \textbf{260} (2011), no.~1, 308--325.

	\bibitem{BoseMadan2018}
D.~Bose and S.~Madan,
 On the Rationality of the Spectrum, \emph{J Fourier Anal Appl} \textbf{24}  (2018), 1037--1047.

	\bibitem{DutkayHaussermannLai2019}
	D.~E. Dutkay, J.~Haussermann, and C.-K.~Lai,
	Hadamard triples generate self-affine spectral measures,
	\emph{Trans. Amer. Math. Soc.} \textbf{371} (2019), no.~2,
	1439--1481.

	\bibitem{DutkayJorgensen}
	D. E. Dutkay, and P. E. T. Jorgensen,
	On the universal tiling conjecture in dimension one,
	\emph{J. Fourier Anal. Appl.} \textbf{19} (2013), 467--477.
	
	\bibitem{DutkayLai2014}
	D.~E. Dutkay and C.-K.~Lai,
	Some reductions of the spectral set conjecture to integers,
	\emph{Math. Proc. Cambridge Philos. Soc.} \textbf{156} (2014), no.~1, 123--135.
	
		\bibitem{FarkasMatolcsiMora2006}
	B.~Farkas, M.~Matolcsi, and P.~M\'ora,
	On Fuglede's conjecture and the existence of universal spectra,
	\emph{J. Fourier Anal. Appl.} \textbf{12} (2006), no.~5, 483--494.
	
	\bibitem{Fuglede1974}
	B.~Fuglede,
	Commuting self-adjoint partial differential operators and a group theoretic problem,
	\emph{J. Funct. Anal.} \textbf{16} (1974), no.~1, 101--121.
	
		\bibitem{Horn:2013:MatrixAnalysis}
	R. A. Horn and C. R. Johnson, 
	Matrix Analysis,
	Cambridge University Press, Cambridge, 2nd edition, 2013.
	
	 \bibitem{IosevichKolountzakis2013}
	A.~Iosevich and M.~N. Kolountzakis,
	Periodicity of the spectrum in dimension one,
	\emph{Anal. PDE} \textbf{6} (2013), no.~4, 819--827.
	
		\bibitem{KissMalikiosisSomlaiVizer2022}
	G.~Kiss, R.~D. Malikiosis, G.~Somlai, and M.~Vizer,
	Fuglede's conjecture holds for cyclic groups of order \(pqrs\),
	\emph{J. Fourier Anal. Appl.} \textbf{28} (2022),
	Article No.~79, 23 pp.
	
	\bibitem{Kolountzakis2012}
	M.~N. Kolountzakis,
	Periodicity of the spectrum of a finite union of intervals,
	\emph{J. Fourier Anal. Appl.} \textbf{18} (2012), no.~4, 673--676.
	
		\bibitem{Kolountzakis2024}
	M.~N. Kolountzakis,
	Orthogonal Fourier analysis on domains,
	\emph{Expo. Math.} (2024), Article 125629.
	
	\bibitem{KolountzakisLevMatolcsi2023}
	M.~N. Kolountzakis, N.~Lev, and M.~Matolcsi,
	Spectral sets and weak tiling,
	\emph{Sampl. Theory Signal Process. Data Anal.}
	\textbf{21} (2023), Paper No.~31, 21 pp.

    	\bibitem{KolountzakisMatolcsi2006}
    M.~N. Kolountzakis and M.~Matolcsi,
    Complex Hadamard matrices and the spectral set conjecture,
    \emph{Collect. Math.} Vol.~Extra (2006), 281--291.
    
    	\bibitem{KonyaginLaba2003}
  S.~Konyagin and I.~\L aba,  Spectra of certain types of polynomials and tiling of integers with translates of finite sets,
    \emph{J. Number Theory} \textbf{103} (2003), 267--280.
    
    	\bibitem{Laba2001}
    I.~\L aba,
    Fuglede's conjecture for a union of two intervals,
    \emph{Proc. Amer. Math. Soc.} \textbf{129} (2001), no.~10,
    2965--2972.
    
    \bibitem{Laba2002}
    I.~\L aba,
    The spectral set conjecture and multiplicative properties of roots
    of polynomials,
    \emph{J. London Math. Soc.} (2) \textbf{65} (2002), no.~3,
    661--671.
    
     \bibitem{LabaLondner2023}
    I.~\L aba and I. Londner, The {C}oven-{M}eyerowitz tiling conditions for 3 odd prime
    factors,
    \emph{Invent. Math.}  \textbf{232} (2023), no.~1,
    365–470.
    
    	\bibitem{LagariasWang1997}
    J.~C. Lagarias and Y.~Wang,
    Spectral sets and factorizations of finite abelian groups,
    \emph{J. Funct. Anal.} \textbf{145} (1997), no.~1, 73--98.
    
    \bibitem{Landau1967}
    H.~J. Landau,
    Necessary density conditions for sampling and interpolation of certain entire functions,
    \emph{Acta Math.} \textbf{117} (1967), 37--52.
    
 \bibitem{Lang2002}
 S.~Lang, Algebra, Graduate Texts in Mathematics, vol. 211, Third edition, Springer-Verlag, New York, 2002.
   
	\bibitem{LevMatolcsi2022}
	N.~Lev, M.~Matolcsi, 
	The Fuglede conjecture for convex domains is true in all dimensions.
	\emph{Acta Math.} \textbf{228} (2022), No.~2, 385–420.
	
		\bibitem{Malikiosis2022}
	R.~D. Malikiosis,
	On the structure of spectral and tiling subsets of cyclic groups,
	\emph{Forum Math. Sigma} \textbf{10} (2022),
	Paper No.~23, 42 pp.
	
	\bibitem{MalikiosisKolountzakis2017}
	R.~D. Malikiosis and M.~N. Kolountzakis,
	Fuglede's conjecture on cyclic groups of order \(p^nq\),
	\emph{Discrete Anal.} (2017), Paper No.~12, 16 pp.
	
	\bibitem{Matolcsi2005}
	M.~Matolcsi,
	Fuglede's conjecture fails in dimension \(4\),
	\emph{Proc. Amer. Math. Soc.} \textbf{133} (2005), no.~10, 3021--3026.

	\bibitem{Shi2019}
	R.~Shi,
	Fuglede's conjecture holds on cyclic groups
	\(\mathbb Z_{pqr}\),
	\emph{Discrete Anal.} (2019), Paper No.~14, 14 pp.

     	\bibitem{TadejZyczkowski2006}
     W.~Tadej and K.~\.{Z}yczkowski,
     A concise guide to complex Hadamard matrices,
     \emph{Open Syst. Inf. Dyn.} \textbf{13} (2006), no.~2, 133--177.

	\bibitem{Tao2004}
T.~Tao,
Fuglede's conjecture is false in \(5\) and higher dimensions,
\emph{Math. Res. Lett.} \textbf{11} (2004), no.~2--3, 251--258.

	\bibitem{Zhou2024}
W.~Zhou,
On rationality of spectrums for spectral sets in $\mathbb{R}$,
\emph{J. Funct. Anal.} \textbf{286} (2024), no.~10, 13pp.

	\endgroup

	\medskip
	\begingroup
	\small
	\noindent\textsc{Xiao-Ye Fu}\\
	Hubei Key Laboratory of Mathematical Sciences, College of Mathematics and Statistics,
	Central China Normal University, Wuhan, Hubei 430079, China\\
	\textit{Email address}: \href{mailto:xiaoyefu@ccnu.edu.cn}{xiaoyefu@ccnu.edu.cn}
	
	\smallskip
	\noindent\textsc{Zi-Jian Song}\\
	Hubei Key Laboratory of Mathematical Sciences, College of Mathematics and Statistics,
	Central China Normal University, Wuhan, Hubei 430079, China\\
	\textit{Email address}: \href{mailto:songzijian2025@outlook.com}{songzijian2025@outlook.com}

	\endgroup
	
\end{document}